\documentclass[11pt, reqno]{amsart}
\usepackage{amsthm}
\usepackage{amsmath,amssymb,amsthm,mathrsfs,amscd}
\usepackage{color}
\usepackage[all]{xy}
\usepackage[english]{babel}
\usepackage{graphicx}
\usepackage[latin1]{inputenc}
\usepackage[T1]{fontenc}
\usepackage[final=true]{hyperref}
\usepackage{tikz}
\usetikzlibrary{patterns}
\usetikzlibrary{arrows}
\usepackage{rotating}
\usepackage{cases}
\usepackage{xkeyval}
\usepackage{ytableau}


\theoremstyle{plain}
\newtheorem*{prop}{Proposition}
\newtheorem{thm}{Theorem}
\newtheorem*{lem}{Lemma}
\newtheorem*{cor}{Corollary}


\theoremstyle{definition}
\newtheorem*{ex}{Example}
\newtheorem*{defi}{Definition}

\newtheorem*{rem}{Remark}


\theoremstyle{remark}

\DeclareMathOperator{\Hom}{Hom}

\DeclareMathOperator{\modd}{mod}

\DeclareMathOperator{\Repp}{Rep}
\DeclareMathOperator{\In}{in}
\DeclareMathOperator{\out}{out}

\DeclareMathOperator{\Coker}{Coker}

\DeclareMathOperator{\Ir}{Irr}

\DeclareMathOperator{\wt}{wt}

\DeclareMathOperator{\N}{\mathbb{N}}

\DeclareMathOperator{\Ext}{Ext}

\DeclareMathOperator{\op}{op}

\newcommand{\ch}{\scriptscriptstyle\vee{}}
\newcommand{\kg}{\unlhd}
\newcommand{\Qu}{\mathrm{Q}}

\advance\textwidth by 1.2in  \advance\oddsidemargin by -.6in \advance\evensidemargin by -.6in

\parindent=0.cm
\parskip=.15cm

\newcounter{cnt}
 \makeatletter
\def\mydggeometry{\makeatletter\dg@YGRID=1\dg@XGRID=20\unitlength=0.003pt\makeatother}
\makeatother \theoremstyle{remark}

\numberwithin{equation}{section}
\makeatletter
\def\section{\def\@secnumfont{\mdseries}\@startsection{section}{1}%
  \z@{.7\linespacing\@plus\linespacing}{.5\linespacing}%
  {\normalfont\scshape\centering}}
\def\subsection{\def\@secnumfont{\bfseries}\@startsection{subsection}{2}%
  {\parindent}{.5\linespacing\@plus.7\linespacing}{-.5em}%
  {\normalfont\bfseries}}
\makeatother

\begin{document}
\title[Nakajima quiver varieties, affine crystals and combinatorics of AR quivers]{Nakajima quiver varieties, affine crystals and combinatorics of Auslander-Reiten quivers}
\author{Deniz Kus}
\address{Faculty of Mathematics Ruhr-University Bochum}
\email{deniz.kus@rub.de}
\author{Bea Schumann}
\address{Mathematical Institute, University of Cologne}
\email{bschuman@math.uni-koeln.de}

\begin{abstract}
We obtain an explicit crystal isomorphism between two realizations of crystal bases of finite dimensional irreducible representations of simple Lie algebras of type $A$ and $D$. The first realization we consider is a geometric construction in terms of irreducible components of certain Nakajima quiver varieties established by Saito and the second is a realization in terms of isomorphism classes of quiver representations obtained by Reineke. We give a homological description of the irreducible components of Lusztig's quiver varieties which correspond to the crystal of a finite dimensional representation and describe the promotion operator in type A to obtain a geometric realization of Kirillov-Reshetikhin crystals.
\end{abstract}

\maketitle

\section{Introduction}
Let $\mathfrak{g}$ be a Kac-Moody algebra with symmetric Cartan matrix. A groundbreaking result by Lusztig was the construction of the canonical basis of the negative part $\mathbf{U}_q^-$ of the quantized enveloping algebra of $\mathfrak{g}$ (see \cite{L90,Lu91}). The canonical basis has remarkable properties and yields a basis for any irreducible highest weight $\mathfrak{g}$-module $V(\lambda)$ of highest weight $\lambda$. The main tool of Lusztig's work is given by a certain class of quiver varieties (for a precise definition see Section~\ref{31}) known in the literture as Lusztig's quiver varieties. 

Motivated by Lusztig's work Nakajima introduced another class of quiver
varieties associated to irreducible highest weight $\mathfrak{g}$-modules (see \cite{Na98a} for details). Here a geometric description of the action on $V(\lambda)$ is obtained and certain Lagrangian subvarieties of Nakajima's quiver varieties are defined whose irreducible components yield a geometric basis of $V(\lambda)$.

Using ideas from \cite{L90} an alternative construction of Lusztig's canonical bases of $\mathbf{U}_q^-$ and $V(\lambda)$ was given by Kashiwara in \cite{Ka91}. These bases have well-behaved combinatorial analogues, called the crystal basis $B(\infty)$ of $\mathbf{U}_q^-$ and $B(\lambda)$ of $V(\lambda)$, respectively. The crystal $B(\infty)$ has a geometric realization in terms of irreducible components of Lusztig's quiver varieties established in \cite{KS97}; we denote this realization by $B^g(\infty)$. Moreover, Saito has shown in \cite{Sai} that $B(\lambda)$ also admits a geometric realization in terms of irreducible components of Lagrangian subvarieties of Nakajima's quiver varieties. We denote the geometric realization of Saito in the rest of the paper by $B^g(\lambda)$ and view the crystal graph of $B^g(\lambda)$ as a full subgraph of $B^g(\infty)$ by identifying the irreducible components in $B^g(\lambda)$ with the subset of irreducible components of Lusztig's quiver varieties satisfying a certain stability condition (see Section~\ref{geometricbinfty}). 

The motivation of this paper is to give a homological interpretation of the actions of the crystal operators in $B^g(\lambda)$ using the combinatorics of Auslander-Reiten quivers of a fixed Dynkin quiver $\Qu$ of the same type as $\mathfrak{g}$. This is achieved by constructing an explicit crystal isomorphism to a realization of $B(\lambda)$ in terms of isomorphism classes of $\Qu$-modules introduced by Reineke in \cite{R97}; we denote this realization by $B^{h}(\lambda)$. We first give in Theorem~\ref{irrcomst} a homological description of the stable irreducible components  and then make use of the homological description of $B^g(\infty)$ developed by the second author in \cite{Schu17} and show in Theorem~\ref{crystalstructurelambda} that the embedding of $B^g(\lambda)$ into $B^g(\infty)$ is compatible with this description when $\mathfrak{g}$ is of finite type $A$ or $D$.

In the last part of the paper we consider the standard orientation of the type $A$ quiver and extend the classical crystal structure on the set of irreducible components to an affine crystal structure isomorphic to the Kirillov-Reshetikhin crystal (KR crystal for short). The main idea is to define the promotion operator $\mathrm{pr}$ (see Definition~\ref{defpr}) which is the analogue of the cyclic Dynkin diagram automorphism $i\mapsto i+1\mod (n+1)$ on the level of crystals. This gives, 
together with the homological description in Theorem~\ref{crystalstructurelambda}, a geometric realization of KR crystals (see Corollary~\ref{geoKR}); for combinatorial descriptions of KR crystals in type $A$ we refer the reader to \cite{S02,K12}). 
It will be interesting to discuss the promotion operator for various orientations of the quiver. The connection to Young tableaux (see Section~\ref{section53}) is not known in that case but the description is still possible with several technical difficulties. This construction will be part of forthcoming work.

This paper is organized as follows. In Section~\ref{sec1} we present background material. We recall facts on representations of quivers and the construction of Auslander-Reiten quivers of Dynkin quivers as well as facts on quantum groups and crystal bases. In Section~\ref{geometricbinfty} we recall the geometric construction of crystal bases in terms of irreducible components of quiver varieties due to Kashiwara-Saito \cite{KS97} and Saito \cite{Sai}. In Section~\ref{section41} we develop combinatorics on the geometric constructions using Auslander-Reiten quivers. For this we recall results by Reineke \cite{R97}, prove a criterion for an irreducible component of Lusztig's quiver variety to contain a stable point (Theorem~\ref{irrcomst}) and construct an explicit crystal isomorphism from $B^{g}(\lambda)$ to $B^{h}(\lambda)$ (Theorem~\ref{crystalstructurelambda}). In the final Section~\ref{KRcryssec} we apply the results of Section~\ref{section41} to give a geometric realization of KR-crystals in type $A$. 
 
\section{Background and Notation}\label{sec1}
\subsection{}\label{subsecARquiver} Let $\Qu$ be a Dynkin quiver of type $A$ or $D$ with path algebra $\mathbb{C}\Qu$. Let $I$ be the vertex set of $\Qu$ and $\Qu_1$ the arrow set. For each arrow $h$ of $\Qu$ pointing from the vertex $i$ to the vertex $j$ we write $\out(h)=i$ and $\In({h})=j$. 
Let $\mathfrak{g}$ be the simple Lie algebra associated to the underlying diagram of $\Qu$ over $\mathbb{C}$ with simple system $\{\alpha_i : i\in I\}$, simple coroots $\{h_i:i\in I\}$, Cartan matrix $C=(c_{i,j})_{i,j\in I}$, weight lattice $P$ and dominant integral weights $P^+$. By Gabriel's theorem the isomorphism classes of finite dimensional indecomposable representations of $\Qu$ over $\mathbb{C}$ are in bijection to the set $\Phi^+$ of positive roots of $\mathfrak{g}$. For a positive root $\alpha\in \Phi^+$, we denote by $M(\alpha)$ a representative of the isomorphism class of indecomposable $\mathbb{C}\Qu$-modules associated to this root.

We denote by $\mathbb{C}\Qu-\modd$ the abelian category of finite dimensional left $\mathbb{C}\Qu$-modules. On $\mathbb{C}\Qu-\modd$ we have a non-degenerate bilinear form called the \emph{Euler form} given by:
$$\left<M,N\right>_R:=\dim\Hom_{\mathbb{C}\Qu}(M,N)-\dim\Ext^1_{\mathbb{C}\Qu}(M,N).$$
This form is known to depend only on the dimension vectors $\underline{\dim} M$ and $\underline{\dim} N$ and is equal to
$$\sum_{j\in I}\dim M_j \dim N_j - \sum_{h\in \Qu_1}\dim M_{\out(h)}\dim N_{\In(h)}.$$
The symmetrization of the Euler form
\begin{align*} \left(M,N\right)_R& :=\left<M,N\right>_{R}+\left<N,M\right>_{R} 
\end{align*}
is determined by the Cartan matrix of $\mathfrak{g}$; we have $\left(M,N\right)_R=\mbox{}^{t}(\underline{\dim} M) C (\underline{\dim} N)$ (see \cite[Lemma 3.6.11]{HA6} for details). 

\subsection{} Here we recall briefly the construction of the Auslander-Reiten quiver $\Gamma_{\Qu}$ of $\Qu$ from \cite[Section 6.5]{Ga80}. The vertices of this quiver are given by the isomorphism classes $M$ of indecomposable representations of $\Qu$ while there is an arrow $M\rightarrow N$ if and only if there is an irreducible morphisms $M\rightarrow N$ in $\mathbb{C}\Qu-\modd$ (non-isomorphisms that cannot be written as a composition of two non-isomorphisms). Let $\Qu^*$ be the quiver with the same set of vertices and reversed arrows. As an intermediate step we construct the infinite quiver $\mathbb{Z}\Qu^*$ which has $\mathbb{Z}\times I$ as set of vertices and for each arrow $h: i\rightarrow j$ in $\Qu$ we draw an arrow from $(r,i)$ to $(r+1,j)$ and $(r,j)$ to $(r,i)$ for all $r\in \mathbb{Z}$.
\begin{ex} Consider the Dynkin quiver of type $A_3$ with orientation
$$2\rightarrow 1\leftarrow 3.$$ 
Then $\mathbb{Z}\Qu^*$ is given as follows:

$\tiny{
\xymatrix{
& & &{\color{blue}(-1,3)} \ar@{->}[rd]& &  (0,3) \ar@{->}[rd]& & (1,3)\\
&\cdots & (-1,1) \ar@{->}[ru] \ar@{->}[rd]& & {\color{blue} (0,1)}\ar@{->}[rd]\ar@{->}[ru] & & (1,1)\ar@{->}[ru] \ar@{->}[rd] &\cdots  & \\
& & & {\color{blue}(-1,2)} \ar@{->}[ru]  & & (0,2) \ar@{->}[ru] &  &(1,2) \\
}}$
\end{ex}
A slice of $\mathbb{Z}\Qu^*$ is a connected full subquiver which contains for each $i\in I$ a unique vertex of the form $(r,i)$, $r\in \mathbb{Z}$. There is a unique slice $S_{\Qu}$ which contains $(0,1)$ and is isomorphic to $\Qu$ (in the above example highlighted in blue). The Nakayama permutation $\nu:\mathbb{Z}\times I\rightarrow \mathbb{Z}\times I$ gives a bijective correspondence between the indecomposable projectives of $\mathbb{C}\Qu$ and the indecomposable injectives of $\mathbb{C}\Qu$. For example, in type $A_n$ we have 
$$\nu(r,i)=(r+i-1, n+1-i).$$ Now $\Gamma_{\Qu}$ can be identified with the full subquiver of $\mathbb{Z}\Qu^{*}$ formed by the vertices lying between $S_{\Qu}$ and the image of this slice under $\nu$ (see \cite[Proposition 6.5]{Ga80}).
Furthermore $\mathbb{Z}\Qu^{*}$ has a translation structure given by the  Auslander--Reiten translation $\tau$ given by $\tau(p,q)=(p-1,q)$. This function gives rise to a bijection between the isomorphism classes of indecomposable non--projectives and the isomorphism classes of indecomposable non--injectives when restricted to $\Gamma_{\Qu}$. We can describe $\tau$ as a function on the dimension vectors. For $i\in I$ define $r_i:\mathbb{Z}_{\ge 0}^{|I|}\rightarrow \mathbb{Z}_{\ge 0}^{|I|} $ via
$$r_i(v)=v-(v,e_i)_R e_i.$$
Here we denote the dimension vector of the simple $\mathbb{C}\Qu$-module $S(i)$ by $e_i$. We fix a labeling $i_1,i_2,\dots,i_n$ of $I$ adapted to $\Qu$, that is $i_1$ is a sink of $\Qu$ and $i_2$ is a sink of $\sigma_1\Qu$ and so on ($\sigma_1$ revereses the direction of all arrows at vertex $1$). For an indecomposable non-projective $\mathbb{C}\Qu$-module $M$, the indecomposable non-injective $\mathbb{C}\Qu$-module $\tau M$ has dimension vector 
$$r_{i_n}r_{i_{n-1}}\cdots r_{i_1}(\underline{\dim} M).$$ If we consider the quiver $1\leftarrow 3\rightarrow 2$ and denote a vertex of  $\Gamma_{\Qu}$ by the dimension vector of its isomorphism class we get

$\tiny{
\xymatrix{
& & && & 011 \ar@{->}[rd]& & 100 \ar@{-->}[ll]^{\tau}\\
&&  & & 010\ar@{->}[rd]\ar@{->}[ru] & & 111\ar@{->}[ru]\ar@{-->}[ll]^{\tau} \ar@{->}[rd] & & \\
& & & & & 110 \ar@{->}[ru] &  &001 \ar@{-->}[ll]^{\tau}\\
}}$
\subsection{}\label{section23}Let $(\mathbb{C}\Qu)^{\op}$ be the opposite algebra of $\mathbb{C}\Qu$. We have a functor $\mathcal{D} :\mathbb{C}\Qu-\modd \rightarrow (\mathbb{C}\Qu)^{\op}-\modd$, called \emph{standard duality functor}, between the category of $\mathbb{C}\Qu$-modules and the category of $(\mathbb{C}\Qu)^{\op}$-modules. For $M\in \mathbb{C}\Qu-\modd$ we have $\mathcal{D}(M):=\Hom_{\mathbb{C}}(M,\mathbb{C})$ with $(\mathbb{C}\Qu)^{\op}$-module structure defined by
$$(a\varphi)(m)=\varphi(am),\ \ m\in M,\ a\in (\mathbb{C}\Qu)^{\op},\ \varphi \in \mathcal{D}(M).$$ Furthermore, for $M,N \in \mathbb{C}\Qu-\modd$, $f\in \Hom_{\mathbb{C}\Qu}(M,N)$, we have $\mathcal{D}(f):\mathcal{D}(N)\rightarrow \mathcal{D}(M), \ \phi \mapsto \phi \circ f$. From the definitions it is straightforward to see that $\mathcal{D}(\tau M)=\tau^{-1} \mathcal{D}(M)$. Also we can identify representations of $(\mathbb{C}\Qu)^{\op}-\modd$ with representations of $\mathbb{C}{\Qu^*}-\modd$. Thus the Auslander-Reiten quiver of $\Qu^*$ can be obtained by reversing each arrow in the Auslander-Reiten quiver of $\Qu$ and interchanging the roles of $\tau$ and $\tau^{-1}$.

\subsection{}
Let $\mathbf{U}_q(\mathfrak{g})$ be the $\mathbb{Q}(q)$-algebra with generators $E_i,F_i,K_i^{\pm 1}$, $i\in I$ and the following relations for $j\in I\setminus \{i\}$
$$K_iK_i^{-1}=K_i^{-1}K_i=1, \quad K_iK_j=K_jK_i, \quad K_iE_iK_i^{-1}=q^{2}E_i$$
$$K_iF_iK_i^{-1}=q^{-2}F_i, \quad E_iF_j-F_jE_i=0, \quad E_iF_i-F_iE_i=\frac{K_i-K_i^{-1}}{q-q^{-1}}$$
\begin{align*} \text{If }c_{i,j}=-1: \ &  E_i^{2}E_j+E_jE_i^2=(q+q^{-1})E_iE_jE_i, \quad F_i^{2}F_j+F_jF_i^2=(q+q^{-1})F_iF_jF_i, \\
& K_iE_jK_i^{-1}=q^{-1}E_j, \quad K_iF_jK_i^{-1}=qF_j.
\end{align*}
\begin{align*} \text{If }c_{i,j}=0: \ & E_iE_j=E_jE_i, \quad F_iF_j=F_jF_i, \quad K_iE_jK_i^{-1}=E_j, \quad K_iF_jK_i^{-1}=F_j.
\end{align*}

For $m\in \N$, let $[m]_{q}:=q^{m-1}+q^{m-3}+\cdots +q^{-m+1}$ and define for $x\in \mathbf{U}_q(\mathfrak{g})$ the divided power
\begin{equation}\label{dividedpower}
x^{(m)}:=\frac{x^m}{[m]_{q}!}.
\end{equation}

For $\lambda\in P^+$ we denote by $V(\lambda)$ the irreducible $\mathbf{U}_q(\mathfrak{g})$-module of highest weight $\lambda$ and let $\mathbf{U}_q^-\subseteq \mathbf{U}_q(\mathfrak{g})$ be the subalgebra generated by $F_i$, $i\in I$.

\begin{defi} \label{abstrcryst} An abstract $\mathfrak{g}$-crystal $B$ is a set endowed with maps
\begin{align*}
\wt: B &\rightarrow P, &
\varepsilon_i : B &\rightarrow \mathbb{Z}\sqcup \{-\infty\}, \quad  \varphi_i:B \rightarrow  \mathbb{Z}\sqcup \{-\infty\}, \\
\tilde{e}_i: B &\rightarrow B \sqcup \{0\}, \quad &\tilde{f}_i:B&\rightarrow B \sqcup \{0\} \quad \text{ for }i\in I.
\end{align*}
satisfying the following axioms for $i\in I$ and $b, b'\in B$
\begin{itemize}
\item $\varphi_i(b)=\varepsilon_i(b)+\wt(b)(h_i)$,
\item if $b\in B$ satisfies $\tilde{e}_i b \ne 0$ then $$\wt(\tilde{e}_i b)=\wt(b) + \alpha_i, \quad \varphi_i(\tilde{e}_i b)=\varphi_i(b)+1,\quad \varepsilon_i(\tilde{e}_i b)=\varepsilon_i(b)-1,$$
\item if $b\in B$ satisfies $\tilde{f}_i b \ne 0$ then $$\wt(\tilde{f}_i b)=\wt(b) - \alpha_i, \quad \varphi_i(\tilde{f}_a b)=\varphi_i(b)-1, \quad \varepsilon_i(\tilde{f}_i b)=\varepsilon_i(b)+1,$$
\item $\tilde{e}_i b =b'$ if and only if $\tilde{f}_i b' =b$,
\item if $\varepsilon_i (b) = - \infty$, then $\tilde{e}_i b=\tilde{f}_i b =0$.
\end{itemize}
\end{defi}

Let $B_1$ and $B_2$ be abstract $\mathfrak{g}$-crystals. A map $\psi:B_1 \sqcup \{0\} \rightarrow B_2\sqcup \{0\}$ satisfying $\psi(0)=0$ is called a \emph{morphism of crystals} if for $b\in B_1$, $\psi(b)\in B_2$ and $i\in I$ we have
$$
\wt(\psi(b)) =\wt(b), \quad \varepsilon_i(\psi(b))=\varepsilon_i(b), \quad \varphi_i(\psi(b))=\varphi_i(b),
$$
$$\psi(\tilde{e}_ib)=\tilde{e}_i\psi(b),\ \text{if  $\tilde{e}_ib\neq 0$}\ \ \ \ \psi(\tilde{f}_ib)=\tilde{f}_i\psi(b),\ \text{if  $\tilde{f}_ib\neq 0$}.
$$
A morphism of crystals which commutes with all $\tilde{e}_i,\tilde{f}_i$ is called \textit{strict morphism of crystals}. An injective (strict) morphism is called a \emph{(strict) embedding of crystals} and a bijective strict morphism is called an \textit{isomorphism of crystals}.

Let $B_1$ and $B_2$ be abstract $\mathfrak{g}$-crystals. The set 
$$B_1\otimes B_2:=\{b_1 \otimes b_2 \mid b_1 \in B_2, \ b_2\in B_2\}$$ 
is called the tensor product of $B_1$ and $B_2$ and admits a $\mathfrak{g}$-crystal structure where $\wt(b_1 \otimes b_2)=\wt(b_1)+\wt(b_2)$ and 
\begin{align*}
\varepsilon_i(b_1\otimes b_2)&= \max\{\varepsilon_i(b_1),\varepsilon_i(b_2)-\wt(b_1)(h_i)\}, \\
\varphi_i(b_1\otimes b_2)&= \max\{\varphi_i(b_2),\varphi_i(b_1)+\wt(b_2)(h_i)\}, \\
\tilde{e}_i(b_1\otimes b_2) & = \begin{cases} \tilde{e}_i b_1 \otimes b_2 & \text{ if } \varphi_i(b_1) \ge \varepsilon_i(b_2) \\ b_1 \otimes \tilde{e}_i b_2 & \text{ else,} \end{cases} \\
\tilde{f}_i(b_1\otimes b_2) & = \begin{cases} \tilde{f}_i b_1 \otimes b_2 & \text{ if } \varphi_i(b_1) > \varepsilon_i(b_2) \\ b_1 \otimes \tilde{f}_i b_2 & \text{ else.} \end{cases}
\end{align*}

\subsection{}\label{sec:crysrep}

We recall the definition of the \emph{crystal bases} $B(\infty)$ and $B(\lambda)$ of $\mathbf{U}_q^{-}$ and $V(\lambda)$, respectively, following \cite[Sections 2 and 3]{Ka91}. We fix $i\in I$ in the rest of the discussion. For $P\in \mathbf{U}_q^{-}$ there exists unique $Q,R \in \mathbf{U}_q^{-}$ such that 
$$E_iP-PE_i=\frac{K_iQ-K_i^{-1}R}{q-q^{-1}}.$$ 
The endomorphism $E_i': \mathbf{U}_q^{-}\rightarrow \mathbf{U}_q^{-}$ given by $E'_i(P)=R$ induces a vector spaces decomposition
$$\mathbf{U}_q^{-}=\displaystyle\bigoplus_{m\ge 0} F_i^{(m)}\mathrm{Ker}(E'_i).$$
We define the \emph{Kashiwara operators } $\tilde{e}_i$, $\tilde{f}_i$ on $\mathbf{U}_q^-$ by the following rule
\begin{equation*}
\tilde{f}_i(F_i^{(m)}u)= F_i^{(m+1)}u, \quad \tilde{e}_i(F_i^{(m)}u)= F_i^{(m-1)}u,\ \ \ u\in \mathrm{Ker}(E_i').
\end{equation*}
Let $\mathcal{A}$ be the subring of $\mathbb{Q}(q)$ consisting of rational functions $f(q)$ without a pole at $q=0$. Let $\mathcal{L}(\infty)$ be the $\mathcal{A}$-submodule generated by all elements of the form
\begin{equation}\label{seqKas}
\tilde{f}_{i_1}\tilde{f}_{i_2}\cdots\tilde{f}_{i_{\ell}}(1)
\end{equation}
and let $B(\infty)\subseteq \mathcal{L}(\infty)/q\mathcal{L}(\infty)$ be the subsets of all residues of \eqref{seqKas}. For $b\in B(\infty)$ we let $\wt(b)$ be the weight of the element and $\varepsilon_i(b)=\max\{\tilde{e}_i^k(b) \ne 0 \mid k\in \mathbb{N}\}.$ This endows $B(\infty)$ with the structure of an abstract crystal (see Definition \ref{abstrcryst}).
The $\mathbb{Q}(q)$-antiautomorphism $*:\mathbf{U}_q^- \rightarrow \mathbf{U}_q^-$ given by
$$E_i^{*}=E_i,\ F_i^{*}=F_i,\ K_i^{*}=K_{i}^{-1}$$
has the properties $\mathcal{L}(\infty)^{*}=\mathcal{L}(\infty)$, $B(\infty)^{*}=B(\infty)$ and $*$ preserves the weight function (see \cite[Theorem 8.3]{K95} for details). We can endow $B(\infty)$ with a second structure of an abstract crystal denoted by  $B(\infty)^*$ with Kashiwara operators (also called the $*$-twisted maps)
$$\tilde{f}_i^*(x)=(\tilde{f}_ix^*)^{*},\ \ \tilde{e}_i^*(x)=(\tilde{e}_ix^*)^{*},\ \ \varepsilon_i^*(x)=\varepsilon_i(x^*)$$
By construction $*$ induces a crystal isomorphism between $B(\infty)$ and $B(\infty)^*$. For $\lambda\in P^+$ let $\pi_{\lambda}:\mathbf{U}_q^- \rightarrow V(\lambda)$ be the natural $\mathbf{U}_q^-$-homomorphism sending $1$ to $v_{\lambda}$ and denote by $(\mathcal{L}(\lambda),B(\lambda))$ the crystal base of $V(\lambda)$. Then we have (see \cite[Theorem 5]{Ka91}) $\pi_{\lambda}(\mathcal{L}(\infty))=\mathcal{L}(\lambda)$ and we obtain an induced surjective homomorphism 
$$\overline{\pi}_{\lambda}: \mathcal{L}(\infty)/q\mathcal{L}(\infty)\rightarrow \mathcal{L}(\lambda)/q\mathcal{L}(\lambda)$$
with the following properties 
\begin{itemize}
\item $B(\lambda)$ is isomorphic to $\{\overline{\pi}_{\lambda}(b): b\in B(\infty)\}\backslash\{0\}$ by the map $\overline{\pi}_{\lambda}$.
\item $\tilde{f}_i\circ \overline{\pi}_{\lambda}=\overline{\pi}_{\lambda} \circ \tilde{f}_i$ for all $i\in I$,
\item If $b\in B(\infty)$ is such that $\overline{\pi}_{\lambda}(b)\ne 0$, then we have $\tilde{e}_i\overline{\pi}_{\lambda}(b)=\overline{\pi}_{\lambda}(\tilde{e}_i b)$ for all $i\in I$.
\end{itemize}
We denote by $T_{\lambda}=\{r_{\lambda}\}$ the abstract $\mathfrak{g}$-crystal consisting of one element and 
$$\wt(t_{\lambda})=\lambda,\ \varepsilon_i(t_{\lambda})=\varphi_i(t_{\lambda})=-\infty,\ \tilde{e}_i t_{\lambda}=\tilde{f}_i t_{\lambda}=0,\ \  \forall i\in I.$$ By the above properties we have an embedding of crystals
$$B(\lambda)\rightarrow B(\infty)\otimes T_{\lambda},\ \ \overline{\pi}_{\lambda}(b)\mapsto b\otimes t_{\lambda}$$ which commutes with the $\tilde{e}_i$'s (but not necessarily with the $\tilde{f}_i$'s) whose image is given by (see \cite[Proposition 8.2]{K95})
\begin{equation}\label{blambdainbinfty}
\{b\otimes t_{\lambda} \in B(\infty) \otimes T_{\lambda} : \varepsilon^*_i(b) \le \lambda(h_i) \ \forall i \in I\}.
\end{equation}
We summarize the above discussion in the following theorem.
\begin{thm}[{\cite[Proposition 8.2]{K95}}]\label{kashiwara} Let $\lambda\in P^{+}$. Then the crystal graph $B(\lambda)$ of the irreducible highest weight module $V(\lambda)$ can be realized as the full subgraph of $B(\infty)$ consisting of all vertices $b\in B(\infty)$ such that $\varepsilon^*_i(b)\le \lambda(h_i)$ for all $i\in I$. For $b\in B(\lambda)$ the Kashiwara operators $\tilde{f}_i^{\lambda},\tilde{e}_i^{\lambda}$ on $B(\lambda)$ are given by
\begin{equation*}
\tilde{e}_i^{\lambda}b=\tilde{e}_i b, \qquad \tilde{f}^{\lambda}_i b =\begin{cases} 0,& \text{ if } \tilde{f}_ib\notin B(\lambda) \\ \tilde{f}_i b,& \text{ if }\tilde{f}_ib\in B(\lambda). \end{cases} 
\end{equation*}
\qed
\end{thm}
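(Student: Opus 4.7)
The plan is to deduce the statement directly from the two pieces of machinery already assembled above: the surjection $\overline{\pi}_\lambda \colon \mathcal{L}(\infty)/q\mathcal{L}(\infty)\rightarrow \mathcal{L}(\lambda)/q\mathcal{L}(\lambda)$ with its compatibility properties with $\tilde{e}_i,\tilde{f}_i$, and the embedding $B(\lambda) \hookrightarrow B(\infty)\otimes T_\lambda$, $\overline{\pi}_\lambda(b) \mapsto b\otimes t_\lambda$, whose image is described by \eqref{blambdainbinfty}. Since the embedding is injective and commutes with the $\tilde{e}_i$'s, it suffices to transfer the crystal structure from the image in $B(\infty)\otimes T_\lambda$ back to $B(\infty)$ itself and to check that it agrees with the formulas stated in the theorem.

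First I would unwind the tensor product rule of Definition \ref{abstrcryst} in the special case of $B(\infty)\otimes T_\lambda$. Because $\varepsilon_i(t_\lambda)=\varphi_i(t_\lambda)=-\infty$ while $\varepsilon_i(b),\varphi_i(b)\in\mathbb{Z}$ for any $b\in B(\infty)$, the inequalities $\varphi_i(b)\ge \varepsilon_i(t_\lambda)$ and $\varphi_i(b)>\varepsilon_i(t_\lambda)$ are always satisfied. Hence both Kashiwara operators act only on the first tensor factor:
\begin{equation*}
\tilde{e}_i(b\otimes t_\lambda)=\tilde{e}_i b\otimes t_\lambda,\qquad \tilde{f}_i(b\otimes t_\lambda)=\tilde{f}_i b\otimes t_\lambda,
\end{equation*}
with the convention that the right-hand side is $0$ whenever $\tilde{e}_i b$ respectively $\tilde{f}_i b$ vanishes in $B(\infty)\sqcup\{0\}$.

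Next I would combine this with the image description \eqref{blambdainbinfty}. For $\tilde{e}_i$, the image is preserved: if $b\otimes t_\lambda$ lies in the image and $\tilde{e}_i b\neq 0$, then $\tilde{e}_i b\otimes t_\lambda$ also lies in the image because pullback through $\overline{\pi}_\lambda$ identifies it with $\tilde{e}_i\overline{\pi}_\lambda(b)\in B(\lambda)$, by the compatibility properties of $\overline{\pi}_\lambda$ recalled just before the statement. This gives the formula $\tilde{e}_i^{\lambda} b = \tilde{e}_i b$. For $\tilde{f}_i$, the operator always produces $\tilde{f}_i b\otimes t_\lambda$, but this element need not belong to the image: it does precisely when $\varepsilon_i^*(\tilde{f}_i b)\le \lambda(h_i)$, i.e.\ when $\tilde{f}_i b$ still satisfies the defining inequality of the theorem. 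Transporting back to $B(\infty)$ yields exactly $\tilde{f}_i^\lambda b = \tilde{f}_i b$ if $\tilde{f}_i b$ is in the prescribed subgraph and $\tilde{f}_i^\lambda b = 0$ otherwise.

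There is no serious obstacle beyond keeping the various identifications straight; the only subtle point is to argue that the subset $\{b\in B(\infty) : \varepsilon_i^*(b)\le \lambda(h_i)\ \forall i\}$ is indeed stable under $\tilde{e}_i$. This I would settle by the indirect argument above, namely noting that the image of $B(\lambda)$ in $B(\infty)\otimes T_\lambda$ is a subcrystal with respect to the $\tilde{e}_i$'s (since $\overline{\pi}_\lambda$ intertwines $\tilde{e}_i$), so the same holds for its preimage in $B(\infty)$. The statement that $B(\lambda)$ is the \emph{full} subgraph on these vertices follows from the bijectivity of $\overline{\pi}_\lambda$ between the subset and $B(\lambda)\setminus\{0\}$ stated above, finishing the proof.
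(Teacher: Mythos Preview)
Your argument is correct and follows exactly the approach indicated in the paper: the theorem there is stated with a \qed and the sentence ``We summarize the above discussion in the following theorem'', i.e.\ the paper does not give its own proof but simply records that the result follows from the properties of $\overline{\pi}_\lambda$ and the image description \eqref{blambdainbinfty} cited from \cite{K95}. You have merely unpacked that summary in detail, using the tensor product rules for $B(\infty)\otimes T_\lambda$ to verify the formulas for $\tilde{e}_i^\lambda$ and $\tilde{f}_i^\lambda$; this is precisely the intended derivation.
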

\section{Geometric construction of crystal bases}\label{geometricbinfty}

In this section we review a geometric construction of the crystals $B(\infty)$ and $B(\lambda)$ for $\lambda\in P^+$ in terms of irreducible components of quiver varieties. 

\subsection{}\label{31}We denote by $\overline{\Qu}=(I,H)$ the associated \emph{double quiver} of $\Qu$, where for each $h\in \Qu_1$, $H$ contains two arrows with the same endpoints, one in each direction. For an arrow $h\in H$, we denote by $\overline{h}$ the arrow with $\out(h)=\In(\overline{h})$ and $\In(h)=\out(\overline{h})$. In this notation, the double quiver $\overline{\Qu}=(I,H)$ has as set of arrows $H=\Qu_1 \sqcup \overline{\Qu_1}$.

\begin{ex} For $\Qu= 1 \xleftarrow{h_1} 2 \xleftarrow{h_2} 3$, the double quiver $\overline{\Qu}$ looks as follows.
\begin{equation*}
\overline{\Qu}=\xymatrix{
1\ar@<0.5ex>[r]^{\overline{h}_1} & 2\ar@<0.5ex>[l]^{h_1} \ar@<0.5ex>[r]^{\overline{h}_2} & 3.\ar@<0.5ex>[l]^{h_2}
}\
\end{equation*}
\end{ex}
 We define the function
\begin{align*}
\epsilon: H & \rightarrow \{\pm 1\} \\
h&\mapsto \left\{\begin{array}{cl} 1, & \mbox{if }h\in \Qu_1 \\ -1, & \mbox{if }h \notin \Qu_1 .\end{array}\right.
\end{align*}
The \emph{preprojective algebra $\Pi(\Qu)$} is the quotient of the path algebra of the double quiver $\overline{\Qu}$ by the ideal generated by
$$\sum_{h\in H, \In(h)=i}\epsilon(h)h\bar{h},\ \ \ i\in I.$$
For a fixed finite--dimensional $I$--graded vector spaces $V=\bigoplus_{i\in I} V_i$ over $\mathbb{C}$, we define \emph{Lusztig's quiver variety} $\Lambda_V$ to be the variety of
representations of $\Pi(\Qu)$ with underlying vector space $V$, i.e.
$$\Lambda_V:=\left\{(x_h)_{h\in H}\in \displaystyle \bigoplus_{h\in H}{\Hom_{\mathbb{C}}(V_{\out(h)},V_{\In(h)})}: \sum_{h\in H, \In(h)=i}\epsilon(h)x_hx_{\bar{h}}=0 \text{ for all }i \in I
\right\}.$$
Let $\Repp_V(\Qu)$ be the variety of representations of $\Qu$ with underlying vector space $V$, that is
$$\Repp_V(\Qu)=\displaystyle \bigoplus_{h\in \Qu_1}{\Hom(V_{\out(h)},V_{\In(h)})},$$
which is clearly a closed subvariety of the affine variety $\Lambda_V$. From now on we constantly identify the points of $\Lambda_V$ (resp. $\Repp_{V}(\Qu)$) with the corresponding modules over $\Pi(\Qu)$ (resp. $\mathbb{C}\Qu$) and write expressions like $M\in
\Lambda_V$ for $M=(V,x)\in \Pi(\Qu)-\modd$. We have an action of the group $G_v=\prod_i{GL(V_i)}$ on $\Lambda_V$ and $\Repp_{V}(\Qu)$ by base change, that is for $M=(V,x) \in \Lambda_V$, $g.M=\widetilde{M}$, where
$\widetilde{M}=(V,\widetilde{x})\in \Pi(\Qu)-\modd$ with
$$\widetilde{x}_h:=g_{\In(h)}x_hg^{-1}_{\out(h)},\ \ h\in H$$
and analogously for $M\in \Repp_{V}(\Qu)$. The orbits of this action on $\Lambda_V$ (resp. $\Repp_{V}(\Qu)$) are exactly the isomorphism classes of representations of $\Pi(\Qu)-\modd$ (resp. $\mathbb{C}\Qu-\modd$) with fixed dimension
vector $v:=\underline{\dim}(V)$. For $M\in  \Repp_{V}(\Qu)$, we denote the corresponding orbit by $\mathcal{O}_M$ and let $\mathfrak{gl}_v=\bigoplus_{i\in I} \mathfrak{gl}_{v_i}$ the Lie algebra of $G_v$.

\begin{rem} The definition of preprojective algebras is motivated by symplectic geometry, namely Lusztig's quiver variety can be viewed as the zero fibre of the moment map for the action of $G_v$ on $\Repp_V(\overline{\Qu})$.
The additional nilpotency condition on the elements of $\Lambda_V$ is omitted, since we restrict ourselves to preprojective algebras of Dynkin quivers and this condition is automatically satisfied (see \cite[Proposition 14.2(a)]{Lu91}).
\end{rem}
Note that, up to isomorphism, $\Lambda_V$ depends only on the graded dimension of $V$. Therefore we also denote $\Lambda_V$ by $\Lambda(v)$,  regarding the graded dimension of the vector spaces as part of the datum of the representations of $\Pi(\Qu)$. The next lemma describes the irreducible components of the variety $\Lambda(v)$.
\begin{lem}\label{irrcomplusztig}
An element $(x=(x_h),\overline{x}=(x_{\overline{h}}))_{h\in Q_1}\in \Repp_{V}(\Qu)\oplus \Repp_{V}(\Qu^{*})$ lies in the quiver variety $\Lambda(v)$ if and only if 
$$\mathrm{tr}([a,x]\overline{x}):=\sum_{h\in Q_1}\mathrm{tr}((a_{\text{in}(h)}x_h-x_ha_{\text{out}(h)})x_{\overline{h}})=0,\  \text{for all $a\in \mathfrak{gl}_v$}.$$
The irreducible components are exactly the closures of conormal bundles to $G_v$-orbits in $\Repp_{V}(\Qu)$:
$$\Lambda(v)=\bigcup_{\mathcal{O}_M\in  \Repp_{V}(\Qu)/G_v}\ \overline{\bigcup_{x\in\mathcal{O}_M}(\{x\}\times X_x)}.$$
where $X_x=\left\{\overline{x}\in \Repp_{V}(\Qu^{*}):  \mathrm{tr}([a,x]\overline{x})=0\ \forall a\in \mathfrak{gl}_v\right\}.$
\proof
For the first claim see \cite[Lemma 5.6]{Kjr16} and hence $\Lambda(v)$ is in fact the union of these conormal bundles. Since $G_v$ is irreducible, we also have that the closures of conormal bundles are irreducible subvarieties of $\Lambda(v)$. The fact that they are as well the irreducible components follows from Gabriel's theorem,  since the orbit space $\Repp_{V}(\Qu)/G_v$ is finite.
\endproof

\end{lem}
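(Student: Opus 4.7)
The plan is to establish the trace characterisation first, then read off the description of the irreducible components from it. For the first claim, I would rewrite the defining preprojective relations at each vertex by pairing them against an arbitrary element $a_i\in \mathfrak{gl}_{v_i}$ and summing. The sign convention $\epsilon(h)=\pm 1$ is precisely designed so that, after applying cyclic invariance of the trace, the quantity $\sum_{i\in I}\mathrm{tr}\bigl(a_i\sum_{\In(h)=i}\epsilon(h)x_h x_{\bar h}\bigr)$ collapses to $\sum_{h\in Q_1}\mathrm{tr}\bigl((a_{\In(h)}x_h-x_h a_{\out(h)})x_{\bar h}\bigr)$, which is $\mathrm{tr}([a,x]\bar x)$. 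Since the trace pairing on $\mathfrak{gl}_v$ is non-degenerate, vanishing of $\mathrm{tr}([a,x]\bar x)$ for all $a$ is equivalent to vanishing of the preprojective relations at every vertex. This is exactly the content of the moment map picture and of \cite[Lemma 5.6]{Kjr16}.

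For the second claim, decompose $\Lambda(v)$ via the projection $\pi\colon \Lambda(v)\to \Repp_V(Q)$, $(x,\bar x)\mapsto x$. The fibre over $x$ is the linear subspace $X_x$, which by the trace characterisation is precisely the annihilator of the tangent space $T_x\mathcal{O}_M=\{[a,x]:a\in\mathfrak{gl}_v\}$ under the trace pairing identifying $\Repp_V(Q^*)$ with the dual of $\Repp_V(Q)$. Hence $\bigcup_{x\in\mathcal{O}_M}\{x\}\times X_x$ is the total space of the conormal bundle $N^*_{\mathcal{O}_M/\Repp_V(Q)}$. Since $G_v$ is connected, each orbit $\mathcal{O}_M$ is irreducible, and a vector bundle over an irreducible base is irreducible; therefore its closure $Z_{\mathcal{O}_M}$ in $\Lambda(v)$ is an irreducible closed subvariety. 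By Gabriel's theorem the set of isomorphism classes of $\mathbb{C}Q$-modules of dimension vector $v$ is finite, so $\Repp_V(Q)/G_v=\{\mathcal{O}_{M_1},\ldots,\mathcal{O}_{M_N}\}$ is finite and
\[
\Lambda(v)=\bigcup_{k=1}^{N} Z_{\mathcal{O}_{M_k}}
\]
is a finite union of irreducible closed subvarieties.

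The remaining subtlety, which I regard as the main step requiring care, is to argue that none of the $Z_{\mathcal{O}_{M_k}}$ is strictly contained in another, so that this is genuinely the irreducible component decomposition rather than just an irreducible cover. The standard way is a dimension count: each conormal bundle is Lagrangian in the symplectic variety $\Repp_V(Q)\oplus \Repp_V(Q^*)$, so
\[
\dim Z_{\mathcal{O}_{M_k}} = \dim \mathcal{O}_{M_k} + \mathrm{codim}_{\Repp_V(Q)} \mathcal{O}_{M_k} = \dim \Repp_V(Q),
\]
which is independent of $k$. Equidimensional irreducible closed subvarieties cannot be mutually contained, so each $Z_{\mathcal{O}_{M_k}}$ is a maximal irreducible closed subset of $\Lambda(v)$, i.e.\ an irreducible component. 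The Lagrangian dimension statement in turn uses the moment-map identification from the first part, so the two halves of the argument fit together neatly.
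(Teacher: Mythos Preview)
Your argument is correct and follows essentially the same route as the paper: reduce the preprojective relations to the trace condition (citing the same reference), identify the fibres over each orbit with the conormal space, use irreducibility of $G_v$ to get irreducibility of each conormal bundle closure, and invoke Gabriel's theorem for finiteness of the decomposition. The one place where you are more careful than the paper is the non-containment step: the paper simply says that being the irreducible components ``follows from Gabriel's theorem, since the orbit space is finite,'' whereas you correctly observe that finiteness alone does not rule out one closure sitting inside another, and you supply the Lagrangian equidimensionality argument ($\dim Z_{\mathcal{O}_M}=\dim\Repp_V(\Qu)$ for every orbit) to close that gap---this is the right fix, and it is implicitly what the paper relies on when it later cites Lusztig's pure-dimension theorem.
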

We denote the irreducible components of $\Lambda(v)$ by $\Ir\Lambda(v)$ and the 
closure of the conormal bundle corresponding to the orbit $\mathcal{O}_M$ by $X_{M}$.
 
\subsection{}\label{gc1}Now, we recall the geometric construction of Kashiwara operators on the set of irreducible components $\Ir\Lambda(v)$ from \cite{KS97}. For $i\in I$ and $M\in\Lambda(v)$, define $\varepsilon_i(M)$ to be the dimension of the $S(i)$-isotypic component of the head of $M$. For $M=(V,x)\in\Pi(\Qu)-\modd$, that is
\begin{equation}\label{eq:epsil}
{\varepsilon_{i}(M)}=\dim\Coker\left(\displaystyle\bigoplus_{h:\In(h)=i}V_{\out(h)}\xrightarrow{x_h}{V_i}\right).
\end{equation}
For $c\in \mathbb{Z}_{\ge 0}$, we further introduce the subsets
\begin{equation*}
\Lambda(v)_{i,c}:=\{M\in \Lambda(v)  \mid \varepsilon_i(M)=c\}.
\end{equation*}
Let $e_i\in \mathbb{Z}_{\ge 0}^{|I|}$ be as usual the $i$-th unit vecor and fix $c\in \mathbb{Z}_{\ge 0}$ such that $v_i-c\ge 0$. We define $$\Lambda(v,c,i):=\{(M,N,\varphi) : M\in \Lambda(v)_{i,c},\ N\in \Lambda(v-ce_i)_{i,0},\ \varphi\in \Hom_{\Pi(\Qu)}(N,M) \text{
injective}\}.$$
Considering the diagram
\begin{equation}\label{diaggeom}
\Lambda(v-ce_i)_{i,0} \xleftarrow{p_1} \Lambda(v,c,i)\xrightarrow{p_2}\Lambda(v)_{i,c},
\end{equation}
where $p_1(M,N,\varphi)=N$ and $p_2(M,N,\varphi)=M$. It is shown in \cite[Lemma 5.2.3]{KS97} that the map $p_2$ is a principal $G_v$-bundle and the map $p_1$ is a smooth map whose fibres are connected varieties. Standard algebraic geometry arguments then show (for $\Lambda(v)_{i,c}\ne\emptyset$) that there is a one--to--one correspondence between the set of irreducible
components of $\Lambda(v-ce_i)_{i,0}$ and the set of irreducible components of $\Lambda(v)_{i,c}$, i.e. 
\begin{equation}\label{eq:irrbijectionn1}
\Ir\Lambda(v-ce_i)_{i,0}\cong\Ir\Lambda(v)_{i,c}.
\end{equation}
Let $X\in \Ir\Lambda(v)$ and define for $i\in I$ the integer $\varepsilon_i(X):=\min_{M\in X}\varepsilon_i(M)$.  The function $\varepsilon_i$ given in (\ref{eq:epsil}) is upper semi-continuous, i.e.
$$\{M\in X: \varepsilon_i(M)\geq \varepsilon_i(X)+1\}$$
is a closed subset. So there is an open dense subset of $X$ such that
$\varepsilon_i$ is constant (namely the value of $\varepsilon_i$ on this subset is 
$\varepsilon_i(X)$).
Let
$$\mathrm{I}\Lambda(v)_{i,c}:=\{X\in \Ir\Lambda(v)\mid\varepsilon_i(X)=c\}.$$
So if $X\in  \mathrm{I}\Lambda(v)_{i,c}$, we obtain from the prior considerations that there is an open dense subset $U_X$ of $X$ such that $U_X\subseteq \Lambda(v)_{i,c}$.  Since $\Lambda(v)$ and $\Lambda(v)_{i,c}$ have pure dimension $\frac{1}{2}\dim \Repp_V(\Qu)$ (see \cite[Theorem 12.3]{Lu91}), we get a bijection 
\begin{equation}\label{eq:irrbijectionn}
 \mathrm{I}\Lambda(v)_{i,c}\rightarrow \mathrm{I}\Lambda(v-ce_i)_{i,0},\ \ X\mapsto \overline{p_1p_2^{-1}(U_X)}.
\end{equation}
Suppose that $\bar{X}\in \mathrm{I}\Lambda(v-ce^i)_{i,0}$ corresponds to the component $X\in \mathrm{I}\Lambda(v)_{i,c}$ und the bijection \ref{eq:irrbijectionn}. We define maps 
\begin{align*}
\tilde{f}_i^c:\mathrm{I}\Lambda(v-ce_i)_{i,0}&\rightarrow \mathrm{I}\Lambda(v)_{i,c},\ \ \bar{X}\mapsto X\\
\tilde{e}_i^{c}:\mathrm{I}\Lambda(v)_{i,c}&\rightarrow \mathrm{I}\Lambda(v-ce^i)_{i,0},\ \ X\mapsto \bar{X} \label{actione}
\end{align*}
The data of these maps yields a crystal structure on $B^g(\infty):=\bigsqcup_{v}\Ir\Lambda(v)$ together with the following maps for $X\in \mathrm{I}\Lambda(v)_{i,c}\subseteq B^g(\infty)$:
$$
\tilde{f}_i(X):=\tilde{f}_i^{c+1}\tilde{e}_i^c(X),\ \ \ \tilde{e}_i(X):=\begin{cases}\tilde{f}_i^{c-1}\tilde{e}_i^c(X),& \text{ if $c>0$}\\
0,& \text{ otherwise}
\end{cases}
$$
$$
\wt(X):=-\displaystyle\sum_{i\in I}v_i\alpha_i \text{ for } X\in \Ir\Lambda(v),\ \ \ \ 
\varphi_i(X):=\varepsilon_i(X)+\wt(X)(h_i).
$$
It is shown in \cite[Theorem 5.3.2]{KS97} that $B^g(\infty)$ is isomorphic to the crystal $B(\infty)$ of $\mathbf{U}_q^-$.
\subsection{} In \cite{Sai}, Saito gave a realization of the crystal $B(\lambda)$ via Nakajima's quiver varieties. In this section we recall the definition of those spaces. To define them we consider a framing on the double quiver $\overline{\Qu}$ by adding an extra vertex $i'$ and an extra arrow $t_i:i\rightarrow i'$ for all $i\in I$.

\begin{ex} The framed double quiver for the Dynkin graph of type $A_3$ is given as follows.
\begin{equation*}
\xymatrix{
1\ar@<0.5ex>[r]^{\overline{h}_1}\ar@<0.5ex>[d]^{t_1} & 2\ar@<0.5ex>[l]^{h_1}\ar@<0.5ex>[d]^{t_2} \ar@<0.5ex>[r]^{\overline{h}_2} & 3\ar@<0.5ex>[l]^{h_2}\ar@<0.5ex>[d]^{t_3} \\
1' & 2' & 3'
\
}
\end{equation*}
\end{ex}
For $v,\lambda \in \mathbb{Z}_{\ge 0}^{|I|}$, we choose $I$-graded vector spaces $V$ and $W$ of graded dimension $v,\lambda$, respectively and define
$$\Lambda(v,\lambda):=\Lambda(v) \times \displaystyle\bigoplus_{i\in I} \Hom(V_i,W_i).$$
The action of the group $G_v$ can be extended on $\Lambda(v,\lambda)$ via
$$g  (x,t) :=(g_i)_{i\in I}  \left((x_h)_{h\in H}, (t_i)_{i\in I}\right)=\left((g_{\In(h)}x_h g_{\out(h)}^{-1})_{h\in H},(t_i g_i^{-1})_{i\in I}\right).$$
We consider the subset $\Lambda(v,\lambda)^{st}$ of stable points in $\Lambda$ defined as
$$\Lambda(v,\lambda)^{st}:=\{(x,t)\in \Lambda(v,\lambda):\displaystyle\bigcap_{\out(h)=i}\left(\ker x_h \cap \ker t_i\right) = 0\}.$$
\begin{rem} This definition is equivalent to the one given in \cite{Na98a} stating that there is no non-trivial $x$-stable subspace of $V$ contained in the kernel of $t$, see \cite[Lemma 3.4]{FS03}. Moreover, this is a stability condition in the sense of Mumford with respect to the character $\theta=(\theta_i)\in \mathbb{Z}^{|I|}$ of $G_v$ with $\theta_i=-1$ for all $i\in I$ (see \cite[Section 3.2]{Na98a}).
\end{rem}
The subset $\Lambda(v,\lambda)^{st}$ is open in $\Lambda(v,\lambda)$ and we clearly have an induced action of the group $G_v$ on $\Lambda(v,\lambda)^{st}$. We further have the following.
\begin{lem}[{\cite[Lemma 3.1]{Na98a}}]\label{free} The action of $G_v$ on $\Lambda(v,\lambda)^{st}$ is free and $\Lambda(v,\lambda)^{st}$ is a non-singular subvariety of $\Lambda(v,\lambda)$.
\qed
\end{lem}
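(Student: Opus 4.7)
The plan is to split the proof into freeness of the $G_v$-action and smoothness of $\Lambda(v,\lambda)^{st}$.

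First I would handle freeness. Take any $g=(g_i)_{i\in I}\in G_v$ fixing a stable point $(x,t)\in \Lambda(v,\lambda)^{st}$. The condition $g\cdot(x,t)=(x,t)$ unpacks to $g_{\In(h)}x_h=x_h g_{\out(h)}$ for all $h\in H$ and $t_ig_i^{-1}=t_i$ for every $i\in I$, so $g$ commutes with the $\Pi(\Qu)$-action on $V$ and $(g_i-\Id)V_i\subseteq \ker t_i$ for each $i$. Decomposing each $V_i$ into generalized eigenspaces of $g_i$, every $\mu$-generalized eigenspace $V^{(\mu)}=\bigoplus_i V^{(\mu)}_i$ is $x$-stable. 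For $\mu\neq 1$ the operator $g_i-\Id$ is invertible on $V^{(\mu)}_i$, so $V^{(\mu)}_i=(g_i-\Id)V^{(\mu)}_i\subseteq \ker t_i$. The stability condition (in the global form recalled in the remark after the definition) forces $V^{(\mu)}=0$, hence $g=\Id+N$ with $N$ nilpotent, commuting with $x$, and satisfying $NV\subseteq \ker t$. Then $\IM N$ is an $x$-stable subspace of $\ker t$, so stability gives $N=0$ and thus $g=\Id$.

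For smoothness, I would realize $\Lambda(v,\lambda)$ as a moment-map fibre in a smooth ambient affine variety. The cleanest way is to enlarge the framing by the dual datum $s=(s_i)\in \bigoplus_i\Hom(W_i,V_i)$ and use the full Nakajima moment map
\begin{equation*}
\mu_N(x,s,t)_i=\sum_{h:\In(h)=i}\epsilon(h)x_hx_{\bar h}+s_it_i,
\end{equation*}
so that $\Lambda(v,\lambda)$ is identified with $\mu_N^{-1}(0)\cap\{s=0\}$. The standard symplectic-reduction argument then says that at a point whose $G_v$-stabilizer is trivial the infinitesimal action is injective, hence by the trace-pairing duality between the infinitesimal action and $d\mu_N$ the latter is surjective; therefore $\mu_N^{-1}(0)$ is smooth at $(x,0,t)$. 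Combined with transversality of the slice $\{s=0\}$ at such points, this yields that $\Lambda(v,\lambda)^{st}$ is a non-singular subvariety of $\Lambda(v,\lambda)$ of the expected dimension.

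The main obstacle is precisely the smoothness step: the preprojective moment map alone does not see $t$, whereas the infinitesimal $G_v$-action on $\Repp_V(\overline{\Qu})\times \bigoplus_i\Hom(V_i,W_i)$ does, via $\xi\mapsto([\xi,x],-t\xi)$, so the naive duality on the smaller ambient space fails. Passing through the extended Nakajima ambient space with its self-dual tangent complex, or equivalently checking transversality of $\{s=0\}$ by hand, is where the real content lies; once that is set up, smoothness follows formally from the freeness established in the first step.
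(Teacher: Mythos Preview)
The paper itself gives no proof of this lemma: it is stated with a citation to Nakajima and a \qed, so there is no argument in the text to compare your proposal against. Your freeness argument is correct and is the standard one.

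The smoothness step, however, has a genuine gap, and in fact the transversality you invoke fails. Take $\Qu=1\leftarrow 2$ with $v=(1,1)$ and $\lambda=(1,1)$; writing $x_h=a$ and $x_{\bar h}=b$ (scalars), the preprojective relation is $ab=0$, so $\Lambda(v,\lambda)=\{ab=0\}\times\mathbb{C}^2_{t_1,t_2}$ is singular along $\{a=b=0\}$. The point $(a,b,t_1,t_2)=(0,0,1,1)$ is stable (each $t_i$ is nonzero on $V_i=\mathbb{C}$) yet lies in the singular locus. In your extended picture, at $(a,b,s_1,s_2,t_1,t_2)=(0,0,0,0,1,1)$ one computes $d\mu_{N,1}=ds_1$ and $d\mu_{N,2}=ds_2$, so $T\mu_N^{-1}(0)$ coincides with $T\{s=0\}$ there and the slice is \emph{not} transversal to the zero fibre.

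What Nakajima actually proves is that the full zero fibre $\mu_N^{-1}(0)$ is smooth on its stable locus: freeness makes the infinitesimal action injective, and the symplectic pairing then forces $d\mu_N$ to be surjective. The paper's $\Lambda(v,\lambda)^{st}$ is the intersection of that smooth variety with the Lagrangian slice $\{s=0\}$, which is in general singular. For the sequel only freeness is used (to form the geometric quotient $\mathfrak{L}(v,\lambda)$ and to identify its irreducible components as in \eqref{eq:comp}--\eqref{eq:comp1}), so the smoothness clause is not load-bearing; but your proposed route cannot supply it, because the transversality assertion on which it rests is false.
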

The \textit{Nakajima quiver variety} is defined to be the geometric quotient of $\Lambda(v,\lambda)^{st}$ by $G_v$
$$\mathfrak{L}(v,\lambda):={\Lambda(v,\lambda)^{st}}/{G_v}.$$ We denote by ${ \Ir\mathfrak{L}(v,\lambda)}$ the set of irreducible components of $\mathfrak{L}(v,\lambda)$. Now using Lemma~\ref{free} we can make the following observations. We have 
\begin{equation}\label{eq:comp}
\Ir\mathfrak{L}(v,\lambda)=\left\{{Y^{\lambda}_{M}}: M \in \mathbb{C}\Qu-\modd\text{ and }Y^{\lambda}_{M}\ne \emptyset\right\}
\end{equation}
where 
$$Y^{\lambda}_{M}:={\left(\left(X_{M}\times \displaystyle\bigoplus_{i\in I}\Hom(V_i,W_i)\right)\cap \Lambda(v,\lambda)^{st}\right)}/{G_v}.$$
Moreover, we have the following identification
\begin{equation}\label{eq:comp1}\Ir\mathfrak{L}(v,\lambda)\cong\{Y\in \Ir\Lambda(v,\lambda): Y\cap \Lambda(v,\lambda)^{st}\ne \emptyset\}.\end{equation}

We conclude that the irreducible components of $\mathfrak{L}(v,\lambda)$ are in one-to-one correspondence to the irreducible components of $\Lambda(v,\lambda)$ that contain a stable point. In \cite{Sai} Saito describes a crystal structure on $\Ir\mathfrak{L}(v,\lambda)$ using similar arguments as in \cite{KS97}.  The key point for our approach is the following theorem.

\begin{thm}[{\cite[Theorem 4.6.4, Lemma 4.6.3]{Sai}}]\label{injectivemap} The map $\mathfrak{i}:\Ir\mathfrak{L}(v,\lambda) \rightarrow B^{g}(\infty),\ Y^{\lambda}_{M}\mapsto X_{M}$ is an embedding of crystals which commutes with the operators $\tilde{e}_i$, $i\in I$. Moreover, $\Ir\mathfrak{L}(v,\lambda)$ is isomorphic to the crystal $B^g(\lambda)$ and hence $B^g(\lambda)$ is the full subgraph of $B^g(\infty)$ with vertices $$\{X_{M}\in B^g(\infty): Y^{\lambda}_{M}\ne \emptyset\}.$$\qed
\end{thm}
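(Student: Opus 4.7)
The plan proceeds in three parts. First, I would verify that $\mathfrak{i}$ is well-defined and injective. By \eqref{eq:comp1}, the irreducible components of $\mathfrak{L}(v,\lambda)$ correspond bijectively to those irreducible components of $\Lambda(v,\lambda)$ that meet the open subset $\Lambda(v,\lambda)^{st}$. Since $\Lambda(v,\lambda) = \Lambda(v) \times \bigoplus_{i} \Hom(V_i, W_i)$ and the second factor is a vector space, the irreducible components of $\Lambda(v,\lambda)$ are precisely the subsets $X_M \times \bigoplus_{i} \Hom(V_i, W_i)$ for $X_M \in \Ir\Lambda(v)$. Thus $Y^{\lambda}_M \mapsto X_M$ is well-defined and injective, with image exactly $\{X_M : Y^{\lambda}_M \neq \emptyset\}$.

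Second, I would establish compatibility of $\mathfrak{i}$ with the operators $\tilde{e}_i$ and with the statistics $\varepsilon_i$, $\varphi_i$, and $\wt$ (the last up to a global shift by $\lambda$; compare the embedding $B(\lambda) \hookrightarrow B(\infty) \otimes T_\lambda$ in \eqref{blambdainbinfty}). Saito's crystal structure on $\Ir\mathfrak{L}(v,\lambda)$ is built by mimicking the diagram \eqref{diaggeom} in the framed setting: one stratifies by the value of $\varepsilon_i$ on the underlying $\Pi(\Qu)$-module, sets up analogous projections between $\mathfrak{L}(v,\lambda)_{i,c}$ and $\mathfrak{L}(v-ce_i,\lambda)_{i,0}$, and defines $\tilde{e}_i, \tilde{f}_i$ by the same recipe. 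The forgetful map $\Lambda(v,\lambda) \to \Lambda(v)$ carries $\Lambda(v,\lambda)^{st}$ into $\Lambda(v)$ and intertwines the framed and unframed diagrams; combined with the fact that $\varepsilon_i$ is an invariant of the underlying $\Pi(\Qu)$-module, this yields $\mathfrak{i}(\tilde{e}_i Y^{\lambda}_M) = \tilde{e}_i X_M$ and equality of the statistics.

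Third, and this is the main obstacle, I would identify the image of $\mathfrak{i}$ with the subset $\{X \in B^g(\infty) : \varepsilon_i^*(X) \leq \lambda(h_i) \text{ for all } i \in I\}$ from Theorem~\ref{kashiwara}. Unwinding the stability condition, a point $(x, \overline{x}, t) \in X_M \times \bigoplus_{i} \Hom(V_i, W_i)$ is stable precisely when, at each $i \in I$, the restriction of $t_i$ to $\bigcap_{\out(h)=i}(\ker x_h \cap \ker \overline{x}_{\overline{h}})$ is injective; viewed as a subspace of $V_i$, this intersection is exactly the vertex-$i$ component of the socle of $(x, \overline{x})$ regarded as a $\Pi(\Qu)$-module. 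Consequently, $Y^{\lambda}_M \neq \emptyset$ if and only if for every $i$ the generic element of $X_M$ has vertex-$i$ socle of dimension at most $\lambda(h_i)$. The remaining and most delicate ingredient is the identification of this generic socle dimension with the $*$-twisted statistic $\varepsilon_i^*(X_M)$; this rests on Kashiwara's anti-involution $*$ exchanging head and socle, together with the geometric realization of $\varepsilon_i^*$ established in \cite{KS97}. Combining this identification with Theorem~\ref{kashiwara} and the compatibility from the previous step shows that $\mathfrak{i}$ identifies $\Ir\mathfrak{L}(v,\lambda)$ with $B^g(\lambda)$ as a full subgraph of $B^g(\infty)$.
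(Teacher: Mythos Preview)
The paper does not prove this theorem: the statement is attributed to Saito \cite[Theorem~4.6.4, Lemma~4.6.3]{Sai} and carries a \qed\ with no accompanying proof. There is therefore nothing in the paper to compare your argument against.

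That said, your outline is a reasonable reconstruction of the shape of Saito's argument, with one caveat. Your third step---identifying the image of $\mathfrak{i}$ with the set $\{X\in B^g(\infty):\varepsilon_i^*(X)\le\lambda(h_i)\}$ via the generic socle dimension and then invoking Theorem~\ref{kashiwara}---actually anticipates material that the present paper develops \emph{after} citing Theorem~\ref{injectivemap}: the identity \eqref{epsilonstar} is established only in the proof of Theorem~\ref{irrcomst}, and there it relies on the duality $\mathcal{D}$ and on \cite{Schu17}, under the additional hypothesis that $\Qu$ is cospecial. Saito's original argument does not pass through this homological description; rather, he builds the crystal structure on $\bigsqcup_v\Ir\mathfrak{L}(v,\lambda)$ directly in the framed setting and checks Kashiwara's characterization of $B(\lambda)$ (existence of a highest weight element, compatibility of the embedding into $B(\infty)\otimes T_\lambda$, etc.) without first computing $\varepsilon_i^*$ in closed form. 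So while your first two steps match the logical structure of the cited result, your third step substitutes a later development of the present paper for Saito's abstract crystal-theoretic verification.
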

A description of the irreducible components \eqref{eq:comp} will be given purely combinatorial in the Auslander-Reiten quiver $\Gamma_{\Qu}$ in Theorem~\ref{irrcomst}.
\section{Crystals via the combinatorics of Auslander-Reiten quivers}
\subsection{}\label{section41}In this subsection we recall some fundamental definitions from \cite{R97} which arise in the context of an explicit crystal structure on certain Lusztig PBW basis. 
We define the posets 
$$\mathcal{P}_i(\Qu):=\{M\in \mathbb{C}\Qu-\modd : M \text{ is indecomposable and }\dim\Hom_{\mathbb{C}\Qu}(M,S(i))\ne 0\}$$
$$\mathcal{P}^{\scriptscriptstyle\vee{}}_i(\Qu):=\{M\in \mathbb{C}\Qu-\modd : M \text{ is indecomposable and }\dim\Hom_{\mathbb{C}\Qu}(S(i),M)\ne 0\}$$ 
together with the relation $\preceq$ given by
$$N\preceq M \iff \Hom_{\mathbb{C}\Qu}(N,M)\ne 0.$$
Furthermore we define the poset  $${\mathcal{S}_i(\Qu)}:=\left\{V=\bigoplus_{j=1}^{k}M_j: \{M_1,\dots, M_k\} \text{ forms an antichain in } \mathcal{P}_i(\Qu)\right\}$$ together with the relation $\unlhd$ given by
$$V \kg V' \iff \dim\Hom_{k\Qu}(B, V') \ne 0 $$
for each indecomposable direct summand $B$ of $V$.
In the same way we can define ${\mathcal{S}^{\vee}_i(\Qu)}$ with the relation $\unlhd^{\vee}$ given by
$$V \kg^{\ch} V' \iff \dim\Hom_{k\Qu}(V, B) \ne 0 $$
for each indecomposable direct summand $B$ of $V'$.
\begin{ex}
Let $\Qu$ be the following quiver
\begin{equation*}
\xymatrix{ & 2 & \\
1 & 3 \ar@{->}[l]\ar@{->}[u] & 4. \ar@{->}[l]}
\end{equation*}
The poset $\mathcal{P}^{\ch}_1(\Qu)$ is the union of all framed modules:

\begin{center}\resizebox{11cm}{!}{$
\xymatrix{
\fbox{{\bf\text{$\begin{matrix}  & 0 &  \\ 1 & 0 & 0\end{matrix}$}}} \ar@{->}[rdd]& & \text{$\begin{matrix}  & 1 &  \\ 0 & 1 & 0\end{matrix}$}\ar@{->}[rdd]\ar@{-->}[ll]^{\tau} & & \fbox{{\bf\text{$\begin{matrix}  & 0 &  \\ 1 & 1 & 1\end{matrix}$}}} \ar@{->}[rdd]\ar@{-->}[ll]^{\tau}\\
\text{$\begin{matrix}  & 1 &  \\ 0 & 0 & 0 \end{matrix}$} \ar@{->}[rd] & & \fbox{{\bf\text{$\begin{matrix}  & 0 &  \\ 1 & 1 & 0\end{matrix}$}}}\ar@{->}[rd]\ar@{-->}[ll]^{\tau} & & \text{$\begin{matrix}  & 1 &  \\ 0 & 1 & 1\end{matrix}$}\ar@{->}[rd]\ar@{-->}[ll]^{\tau}\\
& \fbox{{\bf\text{$\begin{matrix}  & 1 &  \\ 1 & 1 & 0 \end{matrix}$}}} \ar@{->}[rdd]\ar@{->}[ru]\ar@{->}[ruu]&  & \fbox{{\bf\text{$\begin{matrix}  & 1 &  \\ 1 & 2 & 1 \end{matrix}$}}}\ar@{->}[rdd]\ar@{->}[ru]\ar@{->}[ruu]\ar@{-->}[ll]^{\tau}& & \text{$\begin{matrix}  & 0 &  \\ 0 & 1 & 1\end{matrix}$}\ar@{->}[rdd]\ar@{-->}[ll]^{\tau} \\
\\
& & \fbox{{\bf\text{$\begin{matrix}  & 1 &  \\ 1 & 1 & 1\end{matrix}$}}}\ar@{->}[ruu]& & \text{$\begin{matrix}  & 0 &  \\ 0 & 1 & 0\end{matrix}$}\ar@{->}[ruu]\ar@{-->}[ll]^{\tau} & & \text{$\begin{matrix}  & 0 &  \\ 0 & 0 & 1\end{matrix}$\ar@{-->}[ll]^{\tau}}
}
$
}
\end{center}

We have

\begin{center}
\resizebox{!}{0.45cm}{
$\mathcal{S}^{\ch}_1(\Qu)=\left\lbrace\small\text{$\begin{matrix}  & 0 &  \\ 1 & 0 & 0,\end{matrix}$}\ \ \text{$\begin{matrix}  & 1 &  \\ 1 & 1 & 0,\ \end{matrix}$}\ \ \text{$\begin{matrix}  & 1 &  \\ 1 & 1 & 1,\end{matrix}$}\ \ \text{$\begin{matrix}  & 0 &  \\ 1 & 1 & 0,\end{matrix}$}\ \ \text{$\begin{matrix}  & 1 &  \\ 1 & 1 & 1,\ \end{matrix}$}\ \ \text{$\begin{matrix}  & 0 &  \\ 1 & 1 & 0\end{matrix}$}\oplus \text{$\begin{matrix}  & 1 &  \\ 1 & 1 & 1,\end{matrix}$}\ \
\text{$\begin{matrix}  & 1 &  \\ 1 & 2 & 1,\end{matrix}$}\ \ \text{$\begin{matrix}  & 0 &  \\ 1 & 1 & 1\end{matrix}$}\right\rbrace$.}
\end{center}
We have two chains of maximal length in $\mathcal{S}^{\ch}_i(\Qu)$:

\begin{center}
\resizebox{!}{0.4cm}{
${\small\text{$\begin{matrix}  & 0 &  \\ 1 & 0 & 0\end{matrix}$}\kg^{\ch}\text{$\begin{matrix}  & 1 &  \\ 1 & 1 & 0\end{matrix}$} \kg^{\ch} \text{$\begin{matrix}  & 0 &  \\ 1 & 1 & 0\end{matrix}$}\oplus \text{$\begin{matrix}  & 1 &  \\ 1 & 1 & 1\end{matrix}$} \kg^{\ch} \text{$\begin{matrix}  & 0 &  \\ 1 & 1 & 0\end{matrix}$}\kg^{\ch}\text{$\begin{matrix}  & 1 &  \\ 1 & 2 & 1\end{matrix}$}\kg^{\ch}\text{$\begin{matrix}  & 0 &  \\ 1 & 1 & 1\end{matrix}$}}$
}
\end{center}
and
\begin{center}
\resizebox{!}{0.4cm}{
${\small\text{$\begin{matrix}  & 0 &  \\ 1 & 0 & 0\end{matrix}$}\kg^{\ch}\text{$\begin{matrix}  & 1 &  \\ 1 & 1 & 0\end{matrix}$} \kg^{\ch} \text{$\begin{matrix}  & 0 &  \\ 1 & 1 & 0\end{matrix}$}\oplus \text{$\begin{matrix}  & 1 &  \\ 1 & 1 & 1\end{matrix}$} \kg^{\ch} \text{$\begin{matrix}  & 1 &  \\ 1 & 1 & 1\end{matrix}$}\kg^{\ch}\text{$\begin{matrix}  & 1 &  \\ 1 & 2 & 1\end{matrix}$}\kg^{\ch}\text{$\begin{matrix}  & 0 &  \\ 1 & 1 & 1\end{matrix}$}}$
}

\end{center}

\end{ex}

Fix $i\in I$. For a $k\Qu$--module $M$ and an element $V\in \mathcal{S}_i(\Qu)$ and $V\in \mathcal{S}^{\ch}_i(\Qu)$ respectively define
\begin{equation}\label{eq:F}
{ F_i(M,V)}:=\displaystyle\sum_{B\in\mathcal{P}_i(\Qu); \ B{\unlhd} V} \mu_B(M)-\mu_{\tau B}(M).
\end{equation}
\begin{equation}\label{eq:Fcheck}
{ F^{\ch}_i(M,V)}:=\displaystyle\sum_{B\in\mathcal{P}^{\ch}_i(\Qu); \ V{\unlhd}^{\ch} B} \mu_B(M)-\mu_{\tau^{-1} B}(M).
\end{equation}

For a $\mathbb{C}\Qu$-module $M$, let $V_M$ be a $\unlhd$-maximal element of $\mathcal{S}_i(\Qu)$ such that $$\max_{V\in \mathcal{S}_i(\Qu)}F_i(M,V)=F_i(M,V_M)$$ and let $U_M$ be the direct sum of all $\tau B$ such that $B$ is an element of $\mathcal{P}_i(\Qu)$ with $B \ntrianglelefteq V_M$ and $B$ is minimal with this property. 
We define $B^{h}(\infty)$ to be the set of all isomorphism classes of $\mathbb{C}\Qu$-modules.

\begin{defi} A quiver $\Qu$ is called \textit{cospecial} if $\dim\Hom_{\mathbb{C}\Qu}(S(i),M)\le 1$ for all $i\in I$ and all indecomposable $\mathbb{C}\Qu$-modules $M$. Equivalently, this means that each sink $i\in I$ corrspond to a miniscule weight of the Lie algebra of $\Qu$ (see \cite[Corollary 2.22]{Schu17}). We call $\Qu$ \textit{special} if $\Qu^{*}$ is cospecial. This is equivalent to the fact that no thick vertex is a source of $\Qu$, where a vertex $i\in I$  is called \textit{thick} if there exists an indecomposable $\mathbb{C}\Qu$-module $M = (V,x)$ such that $\dim V_i \geq 2$.
\end{defi}

For a special quiver $\Qu$ it has been shown in \cite[Proposition 6.1]{R97} that the module $U_M$ is a direct summand of $M$ and the following defines a crystal structure on $B^{h}(\infty)$ isomorphic to $B(\infty)$:
$$
\tilde{f}_i(M)=(N\oplus V_M), \text{ where }M=N\oplus U_M$$
$$
\varepsilon_i(M)=F_i(M,V_M), \quad \varphi_i(M)=\varepsilon_i(M)+\wt(M)(h_i)$$
$$
\wt(M)=-\sum_{j\in I}(\underline{\dim}M)_j\alpha_j 
$$
\begin{rem}
\begin{enumerate}
\item The explicit description of $\tilde{e}_i$ can be found in \cite[Proposition 2.19]{Schu17}.
\item The isomorphism between $B^{h}(\infty)$ and $B(\infty)$ implies that the module $V_M$ is unique. An alternative proof of this fact can be  given as in \cite[Lemma 3.15]{Schu17}.
\end{enumerate}
\end{rem}
Moreover, if we set $$\varepsilon_i^{*}(M)=\max_{V\in \mathcal{S}_i^{\vee}(\Qu)} F_i^{\vee}(M,V),$$ and assume that $\Qu$ is cospecial and special, then the embedding of the crystal graph of $B^{h}(\lambda)\cong B(\lambda)$ into  $B^{h}(\infty)$ can be described as (see \cite[Proposition 7.4]{R97}):
$$B^{h}(\lambda)=\{M\in B^{h}(\infty): \varepsilon_i^{*}(M)\le \lambda(h_i)\text{ for all }i\in I\}.$$
\subsection{}The remainder of this section develops combinatorics on the geometric construction of crystal bases recalled in Section~\ref{geometricbinfty} in terms of Auslander-Reiten quivers. Recall the embedding of irreducible components of Nakajima's quiver variety into the irreducible components of Lusztig's quiver variety from \eqref{eq:comp1}. We decribe the image of this embedding. Namely the following result gives a criterion for the irreducible components of $\Lambda(v,\lambda)$ to contain a stable point.

\begin{thm}\label{irrcomst} Let $\Qu$ be cospecial, $M\in \mathbb{C}\Qu-\modd$. The following statements are equivalent.
\begin{itemize}
\item[(i)] $Y^{\lambda}_{M}\in \Ir\mathfrak{L}(v,\lambda)$, i.e. $Y^{\lambda}_{M}\neq \emptyset$.\vspace{0,15cm}

\item[(ii)] $F^{\ch}_i(M,V)\le \lambda(h_i)$ for all $V\in \mathcal{S}^{\ch}_i(\Qu) $ and for all $i \in I$.
\end{itemize}
Moreover we have the equality
\begin{equation}\label{epsilonstar}
\varepsilon_i^{*}(M)=\min_{x\in X_{M}}\dim(\displaystyle\bigcap_{\out(h)=i}\ker x_h)
\end{equation}

\end{thm}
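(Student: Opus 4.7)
The plan is to reduce the equivalence (i)$\iff$(ii) and the formula \eqref{epsilonstar} to an interplay between a direct analysis of the stability condition and the abstract characterization of $B(\lambda)\subset B(\infty)$ via $\varepsilon^{*}_i$ (Theorem~\ref{kashiwara}). The key ingredients I will invoke are Saito's embedding $\Ir\mathfrak{L}(v,\lambda)\hookrightarrow B^{g}(\infty)$ commuting with $\tilde e_i$ (Theorem~\ref{injectivemap}), the crystal isomorphism $B^{g}(\infty)\cong B^{h}(\infty)$ from \cite{Schu17} that sends $X_M$ to $M$, and Reineke's homological formula $\varepsilon^{*}_i(M)=\max_{V\in\mathcal{S}^{\vee}_i(\Qu)}F^{\vee}_i(M,V)$.

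First I would establish \eqref{epsilonstar} through a direct study of the stability condition. Given $M=(V,x)$, a pair $(x',t)$ with $x'\in X_M$ and $t\in\bigoplus_i\Hom(V_i,W_i)$ is stable iff $\ker t_i\cap\bigcap_{\out(h)=i}\ker x'_h=0$ for every $i\in I$. For a fixed $x'$, such a $t$ exists iff $\dim W_i\ge\dim\bigcap_{\out(h)=i}\ker x'_h$ for every $i$, since the restriction of $t_i$ to the socle subspace must be injective and any such injection extends to a linear map $V_i\to W_i$. Because $X_M$ is irreducible (Lemma~\ref{irrcomplusztig}) and the socle-dimension functions $x'\mapsto\dim\bigcap_{\out(h)=i}\ker x'_h$ are upper semi-continuous on $X_M$, their simultaneous minima are attained on an open dense subset. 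Consequently $Y^{\lambda}_M\neq\emptyset$ iff $\min_{x'\in X_M}\dim\bigcap_{\out(h)=i}\ker x'_h\le\lambda(h_i)$ for every $i\in I$.

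On the other hand, combining Theorem~\ref{injectivemap} with Theorem~\ref{kashiwara} yields $Y^{\lambda}_M\neq\emptyset$ iff $\varepsilon^{*}_i(X_M)\le\lambda(h_i)$ for every $i\in I$, and $\varepsilon^{*}_i$ is preserved under the abstract crystal isomorphism $B^{h}(\infty)\cong B^{g}(\infty)$ so that $\varepsilon^{*}_i(M)=\varepsilon^{*}_i(X_M)$. Matching the two characterizations across all $\lambda\in P^{+}$---in particular specializing so that $\lambda(h_j)$ is arbitrarily large for $j\neq i$ while $\lambda(h_i)=k$ varies over $\mathbb{N}$---forces the pointwise equality \eqref{epsilonstar}, and the equivalence (i)$\iff$(ii) then follows immediately from Reineke's formula.

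The main obstacle is the simultaneous-genericity step: one must ensure that a single $x'\in X_M$ realizes the minimum of the socle-dimension functions at every vertex $i\in I$ at once. This is handled by intersecting finitely many open dense subsets inside the irreducible variety $X_M$, but requires upper semi-continuity of the socle dimensions on the full conormal-bundle closure rather than only on the orbit $\mathcal{O}_M$. A secondary subtlety is to confirm that the $*$-invariant $\varepsilon^{*}_i$ is genuinely transported by the isomorphism $B^{h}(\infty)\cong B^{g}(\infty)$; since $*$ is determined intrinsically by the crystal structure and Schumann's isomorphism is of abstract crystals, this requires no further argument once the homological description of \cite{Schu17} is invoked.
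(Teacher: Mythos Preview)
Your route differs from the paper's, and it has a hypothesis gap.

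The paper does not go through the abstract characterization of $B(\lambda)\subset B(\infty)$ via $\varepsilon_i^{*}$. Instead, after the same opening observation about stability (that $(x,t)$ is stable iff each $t_i$ is injective on $\bigcap_{\out(h)=i}\ker x_h$), it applies the standard duality $\mathcal{D}:\mathbb{C}\Qu\text{-}\modd\to\mathbb{C}\Qu^{*}\text{-}\modd$. This converts the socle condition on $X_M$ into the head condition on the conormal component $X_{\mathcal{D}(M)}$ over $\Qu^{*}$, i.e.\
\[
\min_{x\in X_M}\dim\Big(\bigcap_{\out(h)=i}\ker x_h\Big)=\varepsilon_i(X_{\mathcal{D}(M)}).
\]
Since $\Qu$ cospecial means $\Qu^{*}$ special, one can then apply \cite[Proposition~4.7]{Schu17} to $\Qu^{*}$ to obtain $\varepsilon_i(X_{\mathcal{D}(M)})=\max_{V\in\mathcal{S}_i(\Qu^{*})}F_i(\mathcal{D}(M),V)$, and a direct check shows $F_i(\mathcal{D}(M),\mathcal{D}(V))=F_i^{\vee}(M,V)$ under the identification $\mathcal{S}_i(\Qu^{*})\cong\mathcal{S}_i^{\vee}(\Qu)$. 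This yields both \eqref{epsilonstar} and (i)$\iff$(ii) at once.

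Your argument that $\varepsilon_i^{*}(X_M)=\min_{x\in X_M}\dim\bigl(\bigcap_{\out(h)=i}\ker x_h\bigr)$ by matching the geometric stability condition with Kashiwara's $\varepsilon_i^{*}$-description of the image of Saito's embedding, and then isolating one $i$ by sending the other $\lambda(h_j)\to\infty$, is correct and holds with no hypothesis on $\Qu$ at all; that is a pleasant general fact. The gap is in the next step: to identify $\varepsilon_i^{*}(X_M)$ with $\varepsilon_i^{*}(M)=\max_{V}F_i^{\vee}(M,V)$ you invoke the crystal isomorphism $B^{g}(\infty)\to B^{h}(\infty)$, $X_M\mapsto M$, from \cite{Schu17} and Reineke's formula for $\varepsilon_i^{*}$. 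Both of these, as stated in the paper, presuppose that $\Qu$ is \emph{special} (this is needed for Reineke's crystal structure on $B^{h}(\infty)$ to exist in the first place), whereas the theorem only assumes $\Qu$ cospecial. Your remark that ``$*$ is determined intrinsically by the crystal structure'' is true but does not help here, because the object $B^{h}(\infty)$ itself is not available. To repair this under the sole cospecial hypothesis you would have to show directly that $\max_{V}F_i^{\vee}(M,V)$ equals the abstract $\varepsilon_i^{*}(X_M)$, and the natural way to do that is precisely the duality-to-$\Qu^{*}$ argument the paper uses.
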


\begin{proof} 
By \eqref{eq:comp1} we have that $Y^{\lambda}_{M}\in \Ir\mathfrak{L}(v,\lambda)$ if and only if $Y^{\lambda}_{M}$
 contains a stable point. Note that stability for a point $(x,t)\in \Lambda(v,\lambda)$ means that 
 $$\Big(\bigcap_{\out(h)=i} \ker x_h\Big) \cap \ker t_i=0\ \ \forall i\in I.$$
This is equivalent to the fact that the restriction of $t_i$ to $\bigcap_{\out(h)=i} \ker x_h$ is injective for all $i\in I$. Hence
$$Y^{\lambda}_{M}\neq \emptyset \Leftrightarrow \exists x\in X_{M}\text{ with }\dim(\displaystyle\bigcap_{\out(h)=i}\ker x_h) \le \lambda(h_i) \ \ \forall i\in I.$$ 

Let $X_{M}^*$ the closure of the conormal bundle to the $G_v$-orbit to $\mathcal{D}(M)$ in $\mathrm{Rep}_V(Q^{*})$ (recall the definition of $\mathcal{D}(M)$ from Section~\ref{section23}). By Lemma~\ref{irrcomplusztig} we have that $X_{M}^*\in B^g(\infty)$. Note that
$$\min_{x\in X_{M}}\dim(\displaystyle\bigcap_{\out(h)=i}\ker x_h)=\varepsilon_i(X_{M}^*).$$
Hence we obtain
\begin{align*}\varepsilon_i(X_{M}^*)\le \lambda(h_i) \text{ for all }i\in I & \Leftrightarrow \varepsilon_i(X_{\mathcal{D}(M)})\leq \lambda(h_i) \text{ for all }i\in I &\\& \Leftrightarrow F_i(\mathcal{D}(M),V)\leq \lambda(h_i) \text{ for all $V\in\mathcal{S}_i(\Qu^{*})$}\text{ and }i\in I,\end{align*}
where the last equivalence follows from \cite[Proposition 4.7]{Schu17} and the fact that $\Qu$ is cospecial if and only if $\Qu^*$ is special. We can identify the elements of $\mathcal{S}^{\ch}_i(\Qu)$ canonically with the elements of $\mathcal{S}_i(\Qu^{*})$ via the standard duality $\mathcal{D}(M)$, since the Auslander-Reiten quiver of $\mathbb{C}{\Qu^*}$ can be obtained by reversing each arrow in the Auslander-Reiten quiver of $\mathbb{C}\Qu$ and interchanging the roles of $\tau$ and $\tau^{-1}$ In particular,
$$\mathcal{D}(V)\in \mathcal{S}^{\ch}_i(\Qu)\Leftrightarrow V\in \mathcal{S}_i(\Qu^{*})$$
Moreover, for $V$ and $V'$ in $\mathcal{S}^{\ch}_i(\Qu)$, we have
$$V\kg V' \text{ if and only if }\mathcal{D}(V)\kg^{\ch}\mathcal{D}(V').$$
This shows that 
\begin{align*}Y^{\lambda}_{M}\neq \emptyset \Leftrightarrow F^{\ch}_i(M,V)\leq \lambda(h_i) \text{ for all $V\in\mathcal{S}^{\ch}_i(\Qu)$}\text{ and }i\in I\end{align*}
since for all $V\in {\mathcal{S}^{\vee}_i(\Qu)}$ we have $F_i(\mathcal{D}(M),\mathcal{D}(V))=F_i^{\ch}(M,V)$. This finishes the proof.
\end{proof}
\subsection{}\label{crystalblambda}
In this section we shall prove the following theorem.
\begin{thm}\label{crystalstructurelambda} 
Let $\Qu$ be special and cospecial. The map $B^{g}(\lambda)\rightarrow B^{h}(\lambda),\ Y^{\lambda}_{M}\mapsto M$ is well-defined and an isomorphism of crystals. We have a commutative diagram
 $$\begin{xy}
  \xymatrix{
      B^{g}(\infty) \ar[rr]   &     & B^{h}(\infty) \\
                              B^{g}(\lambda) \ar[u]^{\mathfrak{i}} \ar[rr]& & B^{h}(\lambda)
\ar[u]^{}    }
\end{xy}$$
\end{thm}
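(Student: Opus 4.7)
The plan is to obtain the bijection $B^g(\lambda) \to B^h(\lambda)$ as a restriction of the already-established homological isomorphism $\Phi: B^g(\infty) \to B^h(\infty)$, $X_M \mapsto M$, from \cite{Schu17}, which serves as the top horizontal arrow of the diagram. The two vertical arrows are the known embeddings into the ambient $B(\infty)$-realizations: on the left, Saito's $\mathfrak{i}$ from Theorem \ref{injectivemap}, $Y^\lambda_M \mapsto X_M$; on the right, the inclusion $B^h(\lambda) \hookrightarrow B^h(\infty)$ given by Reineke's description \cite[Proposition 7.4]{R97}, namely $B^h(\lambda) = \{M \in B^h(\infty) : \varepsilon^*_i(M) \le \lambda(h_i)\ \forall i \in I\}$.

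The first step would be to check that $\Phi$ carries $\mathfrak{i}(B^g(\lambda))$ onto $B^h(\lambda)$. By Theorem \ref{injectivemap}, $\mathfrak{i}(B^g(\lambda))$ consists of those $X_M$ with $Y^\lambda_M \ne \emptyset$. Theorem \ref{irrcomst} rewrites this non-emptiness as $F^{\ch}_i(M,V) \le \lambda(h_i)$ for all $V \in \mathcal{S}^{\ch}_i(\Qu)$ and $i \in I$, which upon taking the maximum over $V$ is exactly $\varepsilon^*_i(M) \le \lambda(h_i)$ for all $i$. So under $\Phi$ the subsets match, and $Y^\lambda_M \mapsto M$ is well-defined and bijective.

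For the crystal-structure claim I would invoke Theorem \ref{kashiwara}: both $B^g(\lambda)$ and $B^h(\lambda)$ appear as full subcrystals of their ambient $B(\infty)$-realizations, so $\wt$, $\varepsilon_i$, $\varphi_i$, and $\tilde{e}_i$ are simply inherited from the ambient crystal while $\tilde{f}_i$ agrees with the ambient $\tilde{f}_i$ whenever the result remains in the subset and is $0$ otherwise. Since $\mathfrak{i}$ is a strict embedding for $\tilde{e}_i$ and the numerical data (Theorem \ref{injectivemap}) and $\Phi$ is a strict crystal isomorphism, their composition intertwines all ambient operators and data; coupled with the matching of subsets already proved, this forces the $\tilde{f}_i$'s to intertwine as well. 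The commutativity of the diagram is then immediate from the construction.

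The main technical obstacle is the bridge between Nakajima's geometric stability condition (the shared-kernel condition on framed data $(x,t)$) and the purely homological expression $\varepsilon^*_i(M) = \max_V F^{\ch}_i(M,V)$; once this is settled via Theorem \ref{irrcomst} (using the duality $\mathcal{D}$ of Section~\ref{section23} together with \cite[Proposition 4.7]{Schu17}), the present theorem reduces to restricting the known isomorphism $B^g(\infty) \cong B^h(\infty)$ to compatible subsets, and the remainder of the argument is a formal diagram chase.
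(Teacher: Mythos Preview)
Your argument is correct, and indeed slightly more streamlined than the paper's. Both proofs restrict the isomorphism $\Phi:B^g(\infty)\to B^h(\infty)$ of \cite[Theorem~3.26]{Schu17} to the subsets corresponding to $B(\lambda)$ and check that the resulting map is a crystal isomorphism; the difference lies in how the compatibility of the $\tilde f_i$-operators is verified. You match the underlying vertex sets via Theorem~\ref{irrcomst} and then invoke Theorem~\ref{kashiwara} together with Theorem~\ref{injectivemap} and \cite[Proposition~7.4]{R97} to conclude that both $B^g(\lambda)$ and $B^h(\lambda)$ carry the \emph{full subgraph} crystal structure; since $\Phi$ is a strict isomorphism on the ambient crystals and the vertex sets coincide, the restriction is automatically a crystal isomorphism. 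The paper instead establishes two preparatory results, Lemma~\ref{egal} and Proposition~\ref{keine}, which compute explicitly how $\varepsilon_i^\ast$ behaves under $\tilde f_j$ and then show by a direct calculation that $\tilde f_i(M)\notin B^h(\lambda)$ if and only if $\varphi_i(X_M)=0$ in $B^g(\lambda)$. This hands-on verification amounts to re-proving, in this particular setting, the abstract compatibility you deduce from the full-subgraph description. Your route is shorter and makes clearer that the theorem is a formal consequence of Theorem~\ref{irrcomst}; the paper's route has the advantage of yielding Lemma~\ref{egal} as a byproduct, which gives precise control over $\varepsilon_i^\ast(\tilde f_j M)$ and may be of independent use. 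One small point worth making explicit in your write-up: the weight functions on $B(\lambda)$ and $B(\infty)$ differ by the constant shift $\lambda$, so ``inherited'' should be read modulo this shift (which of course cancels on both sides).
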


Before we are able to prove Theorem \ref{crystalstructurelambda}, we need some preparatory work to show that the crystal operators are well-defined. 

\begin{lem}\label{egal} Let $M$ be in $\mathbb{C}\Qu-\modd$. Then we have
$$\varepsilon^{*}_i(\tilde{f}_j (M))=\begin{cases}\varepsilon_i^{*}(M),& \text{$i\neq j$}\\
\varepsilon^{*}_i(M) +1 & \text{if $i=j$, } V_M=S(i)\text{ and }\displaystyle\max_{V\in\mathcal{S}^{\ch}_i(\Qu)}F_i^{\ch}(M,V)=F_i^{\ch}(M,S(i))\\
\varepsilon^{*}_i(M) & \text{else.} \end{cases}$$
\end{lem}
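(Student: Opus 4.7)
The plan is to exploit additivity of the multiplicity functions $\mu_B$ in the first argument of $F_i^{\vee}$. Since Reineke's rule gives $\tilde{f}_j M = N \oplus V_M$ with $M = N \oplus U_M$ for a common summand $N$, for every $V \in \mathcal{S}_i^{\vee}(\Qu)$ one has the identity
\begin{equation}\label{addit}
F_i^{\vee}(\tilde{f}_j M, V) - F_i^{\vee}(M, V) = F_i^{\vee}(V_M, V) - F_i^{\vee}(U_M, V).
\end{equation}
The lemma reduces to analyzing \eqref{addit} across $V \in \mathcal{S}_i^{\vee}(\Qu)$ and comparing the resulting maxima. The main tool is the AR-mesh behaviour of $\mu_B-\mu_{\tau^{-1}B}$ along $\tau$-orbits, which telescopes cleanly under the special/cospecial hypothesis.

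\textbf{Case $i\neq j$.} The goal is to show that \eqref{addit} vanishes for \emph{every} $V$, so that $\varepsilon_i^*$ is preserved. Expanding,
$$F_i^{\vee}(V_M,V)-F_i^{\vee}(U_M,V)=\sum_{B\in\mathcal{P}_i^{\vee}(\Qu),\, V\kg^{\vee}B}\bigl[(\mu_B-\mu_{\tau^{-1}B})(V_M)-(\mu_B-\mu_{\tau^{-1}B})(U_M)\bigr].$$
The structural input is that $V_M$ is a $\kg$-antichain in $\mathcal{P}_j(\Qu)$ while $U_M=\bigoplus_B \tau B$ runs over the $\kg$-minimal $B\in\mathcal{P}_j(\Qu)$ with $B\not\kg V_M$. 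Because $i\neq j$, the $j$-indexed data controlling $V_M$ and $U_M$ sits in a $\tau$-orbit structure that meshes with the $\mathcal{P}_i^{\vee}$-indexed summation, and a telescoping cancellation (built from the minimality relation between $V_M$ and the covers $B$ defining $U_M$) forces equality on the two sides. The special/cospecial hypothesis is what guarantees that all relevant multiplicities are $0$ or $1$ and that the Hom-conditions in $\kg$ and $\kg^{\vee}$ are compatible under this mesh.

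\textbf{Case $i=j$, $V_M=S(i)$, $F_i^{\vee}(M,\cdot)$ maximized at $V=S(i)$.} First observe $U_M=0$: every $B\in\mathcal{P}_i(\Qu)$ satisfies $\Hom(B,S(i))\neq 0$, i.e.\ $B\kg S(i)=V_M$, so no $B$ is $\kg$-minimal with $B\not\kg V_M$. Hence $\tilde{f}_i M=M\oplus S(i)$, and \eqref{addit} becomes the explicit quantity $F_i^{\vee}(S(i),V)$. Using $\mu_B(S(i))=\delta_{B,S(i)}$ and $\mu_{\tau^{-1}B}(S(i))=\delta_{B,\tau S(i)}$, one computes $F_i^{\vee}(S(i),S(i))=1$; the potentially cancelling term at $B=\tau S(i)$ is handled either by $S(i)$ being injective or by $\Hom(S(i),\tau S(i))=0$, both controlled by cospeciality at $i$. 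Together with the maximizer hypothesis and a routine check that no $V\neq S(i)$ can beat $F_i^{\vee}(M,S(i))+1$, this yields $\varepsilon_i^*(\tilde{f}_i M)=\varepsilon_i^*(M)+1$.

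\textbf{Residual case ($i=j$ but the previous sub-hypothesis fails).} If either $V_M\neq S(i)$ or $S(i)$ is not a maximizer of $F_i^{\vee}(M,\cdot)$, one refines the bookkeeping from Case~1 to show that \eqref{addit} is $\leq 0$ for every $V$ and vanishes at some maximizer of $F_i^{\vee}(M,\cdot)$, so $\varepsilon_i^*$ is unchanged. The failure of one of the two conditions is exactly what precludes a strict increase.

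The main obstacle is the combinatorial identity driving Case~1 (and its sharpening in the residual case): translating the antichain/minimality data of $V_M$ and $U_M$ (indexed by $\mathcal{P}_j$) into AR-mesh cancellations visible through the dual poset $\mathcal{P}_i^{\vee}$. Once this identity is in place, the remaining steps are direct multiplicity counts using the special/cospecial hypothesis and the definitions in Section~\ref{section41}.
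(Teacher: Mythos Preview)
Your argument in the case $i\neq j$ contains a genuine error. You claim that the difference $F_i^{\vee}(V_M,V)-F_i^{\vee}(U_M,V)$ vanishes for \emph{every} $V\in\mathcal{S}_i^{\vee}(\Qu)$, but this is false. Take $\Qu$ to be $1\leftarrow 2\leftarrow 3$ in type $A_3$, set $j=2$, $i=1$, and $M=S(1)$. Writing $M(r,s)$ for the indecomposable with dimension vector $\alpha_r+\cdots+\alpha_s$, one computes $V_M=M(1,2)$ and $U_M=S(1)$, so $\tilde f_2 M=M(1,2)$. For $V=M(1,2)\in\mathcal{S}_1^{\vee}(\Qu)$ one finds $F_1^{\vee}(M(1,2),M(1,2))=1$ while $F_1^{\vee}(S(1),M(1,2))=0$; the difference is $1$, not $0$. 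The maxima over $V$ do coincide (both equal $1$), but not through any pointwise identity---the telescoping you invoke simply does not occur. Your residual case for $i=j$ has the same defect: when $V_M=S(i)$ but $S(i)$ is not a maximizer of $F_i^{\vee}(M,\cdot)$, one has $U_M=0$ and the difference at $V=S(i)$ equals $F_i^{\vee}(S(i),S(i))=+1$, contradicting your claim that it is $\le 0$ everywhere.

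The paper takes an entirely different route for $i\neq j$: it invokes the geometric formula $\varepsilon_i^{*}(M)=\min_{x\in X_M}\dim\bigcap_{\out(h)=i}\ker x_h$ established in Theorem~\ref{irrcomst} together with the geometric definition of the Kashiwara operator $\tilde f_j$ on $B^g(\infty)$ from Section~\ref{gc1}. Via the crystal isomorphism $B^h(\infty)\cong B^g(\infty)$ the statement then reduces to a property of the geometric model, where it is immediate. For $i=j$ with $V_M\neq S(i)$, the paper's key observation---which your sketch does not identify---is that $\mathcal{P}_i(\Qu)\cap\mathcal{P}_i^{\vee}(\Qu)=\{S(i)\}$ (via the composite $B\twoheadrightarrow S(i)\hookrightarrow B$ and $\End_{\mathbb{C}\Qu}(B)=\mathbb{C}$), combined with $\Hom_{\mathbb{C}\Qu}(B,\tau B)=0$ for indecomposable $B$. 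These two facts force the summands of $V_M$ and of $U_M$, as well as their relevant $\tau$-translates, to lie outside $\mathcal{P}_i^{\vee}(\Qu)$, so that neither module contributes to $F_i^{\vee}$ at all. Your treatment of the sub-case $V_M=S(i)$ with $S(i)$ a maximizer is correct and matches the paper's Case~1.
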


\begin{proof}The case $i\neq j$ is clear by \eqref{epsilonstar} and the definition of  $\tilde{f}_i$ (see Section~\ref{gc1}).

Let $i=j$. We consider two cases.
\vspace{0,2cm}

\textit{Case 1:} Assume that $\tilde{f}_i(M)=(M\oplus S(i))$. Hence
$$F_i^{\ch}(M,V)=F_i^{\ch}(M\oplus S(i),V) \text{ for all } V\in \mathcal{S}^{\ch}_i(\Qu)\backslash \{S(i)\}$$
and we get  $$F_i^{\ch}(M,S(i))=F_i^{\ch}(M\oplus S(i),S(i))+1.$$
We conclude for this case
$$\varepsilon^{*}_i(\tilde{f}_i (M))=\begin{cases} \varepsilon^{*}_i(M) +1  & \text{ if } \displaystyle\max_{V\in\mathcal{S}^{\ch}_i(\Qu)}F_i^{\ch}(M,V)=F_i^{\ch}(M,S(i)) \\
\varepsilon^{*}_i(M) & \text{ else.} \end{cases}$$
\vspace{0,2cm}

\textit{Case 2:} In this case we suppose that $\tilde{f}_i (M)=N$ with $N \ncong M\oplus S(i)$. Then $N\cong M'\oplus V_M$ where $M=M'\oplus U_M$ with $U_M$ and $V_M$ as in Section \ref{section41}. Let $B$ be an indecomposable $\mathbb{C}\Qu$-module in $\mathcal{P}_i(\Qu)\cap \mathcal{P}^{\ch}_i(\Qu)$. Then we have the following homomorphisms:
$$B\twoheadrightarrow S(i) \hookrightarrow B$$
which implies $\mathcal{P}_i(\Qu)\cap \mathcal{P}^{\ch}_i(\Qu)=\{S(i)\}$. So there is no indecomposable direct summand of $V_M$ in $\mathcal{P}^{\ch}_i(\Qu)$ which gives
$$\varepsilon^{*}_i(\tilde{f}_i(M))\le \varepsilon^{*}_i(M).$$
Assume that there is an indecomposable direct summand $C$ of $U_M$ in $\mathcal{P}^{\ch}_i(\Qu)$. Then, from the definition, there is a $B\in \mathcal{P}_i(\Qu)$ such that $C=\tau B$ and we have homomorphisms
$$B\twoheadrightarrow S(i) \hookrightarrow C=\tau B.$$
Hence $\Hom_{\mathbb{C}\Qu}(B,\tau B)\ne 0$, which is a contradiction. An analog argument shows that there cannot be an indecomposable direct summand  $B$ of $V_M$ such that $\tau^{-1}B\in \mathcal{P}^{\ch}_i(\Qu)$. This yields in this case
$$\varepsilon^{*}_i(\tilde{f}_i(M))=\varepsilon^{*}_i(M).$$
\end{proof}
\begin{prop}\label{keine} Let $\lambda\in P^+$ and $M$ be an element of $B^{h}(\lambda)$. Then for all $i\in I$:
$$\exists j\in I: \varepsilon^{*}_j(\tilde{f}_i(M))> \lambda(h_j) \Longleftrightarrow \varphi_i(X_{M})=0.$$
\end{prop}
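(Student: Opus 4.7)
The plan is to analyze the two sides of the equivalence separately: reduce the LHS to an explicit combinatorial condition on $M$ using Lemma~\ref{egal}, and interpret the RHS geometrically via the embedding $\mathfrak{i}$ of Theorem~\ref{injectivemap}; the two are then matched through the canonical identification between Reineke's homological and Kashiwara--Saito's geometric models of $B(\infty)$.

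For the LHS, since $M\in B^{h}(\lambda)$ gives $\varepsilon^{*}_j(M)\le\lambda(h_j)$ for every $j$, Lemma~\ref{egal} yields $\varepsilon^{*}_j(\tilde{f}_i(M))=\varepsilon^{*}_j(M)\le\lambda(h_j)$ whenever $j\neq i$. Hence the existential index in the LHS is forced to be $j=i$, reducing the condition to $\varepsilon^{*}_i(\tilde{f}_i(M))>\lambda(h_i)$. Applying the lemma again, $\varepsilon^{*}_i(\tilde{f}_i(M))$ is either $\varepsilon^{*}_i(M)$ or $\varepsilon^{*}_i(M)+1$, with the larger value occurring precisely when $V_M=S(i)$ and $\max_{V\in\mathcal{S}^{\vee}_i(\Qu)}F^{\vee}_i(M,V)=F^{\vee}_i(M,S(i))$. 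Combined with $\varepsilon^{*}_i(M)\le\lambda(h_i)$, the LHS is equivalent to the conjunction
\begin{equation*}
V_M=S(i),\quad \max_{V}F^{\vee}_i(M,V)=F^{\vee}_i(M,S(i)),\quad \varepsilon^{*}_i(M)=\lambda(h_i).
\end{equation*}

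For the RHS, I interpret $\varphi_i(X_M)$ as the crystal function of the highest-weight crystal $B^{g}(\lambda)\cong B(\lambda)$ obtained by viewing $X_M=\mathfrak{i}(Y^{\lambda}_M)$ inside $B^{g}(\infty)$. By standard crystal theory, $\varphi_i(X_M)=0$ is equivalent to $\tilde{f}_i Y^{\lambda}_M=0$ in $B^{g}(\lambda)$, i.e.\ the geometric $\tilde{f}_i$-successor $X_{M'}:=\tilde{f}_i(X_M)$ in $B^{g}(\infty)$ does not belong to $\mathfrak{i}(\Ir\mathfrak{L}(v+e_i,\lambda))$. By~\eqref{eq:comp1} this says $Y^{\lambda}_{M'}=\emptyset$, which Theorem~\ref{irrcomst} translates into the existence of some $j$ with $\varepsilon^{*}_j(M')>\lambda(h_j)$.

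To close the equivalence I would invoke the canonical crystal isomorphism $B^{g}(\infty)\cong B^{h}(\infty)$ sending $X_M\leftrightarrow M$ established in \cite{Schu17}; since this isomorphism intertwines the two $\tilde{f}_i$-actions, we have $M'=\tilde{f}_i(M)$, and substituting recovers exactly the LHS. The main obstacle is this last compatibility: verifying that Kashiwara--Saito's geometrically defined $\tilde{f}_i$ on $B^{g}(\infty)$ coincides with Reineke's homologically defined $\tilde{f}_i$ on $B^{h}(\infty)$ under the bijection $X_M\leftrightarrow M$. This identification is nontrivial but is precisely the content of \cite{Schu17}; once it is invoked, the geometric reduction of the RHS meets the combinatorial reduction of the LHS via Lemma~\ref{egal} and the proposition follows.
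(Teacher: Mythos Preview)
Your argument is correct, and it takes a genuinely different route from the paper's. The paper proceeds by a direct computation: after reducing to $j=i$ via Lemma~\ref{egal}, it uses the identification $\varepsilon_i(X_M)=F_i(M,V_M)$ from \cite{Schu17} together with an Auslander--Reiten formula computation showing $F_i(M,S(i))+F^{\vee}_i(M,S(i))=(M,S(i))_R$, and then matches this against $\varphi_i(X_M)=\varepsilon_i(X_M)+\lambda(h_i)-(M,S(i))_R$ to obtain both implications by squeezing inequalities. Your approach instead stays at the level of crystal structures: you read $\varphi_i(X_M)=0$ as $\tilde{f}_i^{\lambda}Y^{\lambda}_M=0$, translate this via Theorem~\ref{injectivemap} (only the $\tilde{e}_i$-compatibility is needed) into $Y^{\lambda}_{M'}=\emptyset$ for $X_{M'}=\tilde{f}_i X_M$, apply Theorem~\ref{irrcomst}, and finally identify $M'=\tilde{f}_i(M)$ via the crystal isomorphism of \cite{Schu17}. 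This is cleaner and shows that the proposition is essentially a formal consequence of Theorem~\ref{irrcomst}, Theorem~\ref{injectivemap}, and the $B^g(\infty)\cong B^h(\infty)$ isomorphism; the paper's approach, by contrast, yields the explicit Euler-form identity as a byproduct. One remark: your preliminary reduction of the LHS through Lemma~\ref{egal} to the three displayed conditions is never actually used in your equivalence argument and could be omitted.
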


\begin{proof} Since $\varepsilon^{*}_i(M)\leq \lambda(h_i)$ for all $i\in I$, it follows from Lemma~\ref{egal} that we only need to show the equivalence
$$\varepsilon^{*}_i(\tilde{f}_i(M))> \lambda(h_i) \Longleftrightarrow \varphi_i(X_{M})=0.$$
In the case of $\varepsilon^{*}_i(\tilde{f}_i(M))> \lambda(h_i)$ we obtain with Lemma~\ref{egal} that
$$\varepsilon^{*}_i(M)=\lambda(h_i),\ \ \ \varepsilon_i(M)=F_i(M,S(i)).$$
Since $\Qu$ is special and cospecial, we have
$$\dim\Hom_{\mathbb{C}\Qu}(M,S(i))=\sum_{B\in \mathcal{P}_i(\Qu)}\mu_B(M)\dim\Hom_{\mathbb{C}\Qu}(B,S(i))=\sum_{B\in \mathcal{P}_i(\Qu)}\mu_B(M).$$
$$\dim\Hom_{\mathbb{C}\Qu}(S(i),M)=\sum_{B\in \mathcal{P}^{\vee}_i(\Qu)}\mu_B(M)\dim\Hom_{\mathbb{C}\Qu}(S(i),B)=\sum_{B\in \mathcal{P}^{\vee}_i(\Qu)}\mu_B(M).$$
This implies 
\begin{align*}F_i(M,S(i))+F^{\vee}_i(M,S(i))&=\dim\Hom_{\mathbb{C}\Qu}(M,S(i))-\dim\Hom_{\mathbb{C}\Qu}(\tau^{-1}M,S(i))&\\&
+\dim\Hom_{\mathbb{C}\Qu}(S(i),M)-\dim\Hom_{\mathbb{C}\Qu}(S(i),\tau M)&\\&=\langle M,S(i)\rangle _R+\langle S(i),M\rangle _R,
\end{align*}
where the last equation follow from the Auslander-Reiten formulas (see \cite[Corollary 2.14]{ASS06}). Hence we have 
\begin{align*}
\varphi_i(X_{M})&=\varepsilon_i(X_{M})+\wt(X_{M})(h_i)\\
&=F_i(M,S(i))+\lambda(h_i)-(M,S(i))_R\\
&=F_i(M,S(i))+\lambda(h_i)-\langle M,S(i)\rangle _R-\langle S(i),M\rangle _R\\
&=F_i(M,S(i))+\lambda(h_i)-F_i(M,S(i))-F_i^{\ch}(M,S(i))=0.
\end{align*}
where the second equality follows from the crystal isomorphism in \cite[Theorem 3.26]{Schu17}. 
Conversely, assume that $\varphi_i(X_{M})=0,$
i.e.
$$0=\varepsilon_i(X_{M})+\wt(X_{M})(h_i)=\varepsilon_i(X_{M})+\lambda(h_i)-F_i(M,S(i))-F_i^{\ch}(M,S(i)).$$
Since $M\in B^{h}(\lambda)$ we have $F_i^{\ch}(M,S(i))\leq \lambda(h_i)$ and $F_i(M,S(i))\leq \varepsilon_i(X_{M})$. Thus
$$\varepsilon_i(X_{M})=F_i(M,S(i)),\ \ \lambda(h_i)=F_i^{\ch}(M,S(i))=\varepsilon_i^{*}(M).$$
Using Lemma \ref{egal}, we get
$\varepsilon^{*}_i(\tilde{f}_i(M))> \varepsilon_i^{*}(M) =\lambda(h_i).$
\end{proof}

\textit{Proof of Theorem \ref{crystalstructurelambda}}
 From  Lemma~\ref{egal} and Proposition~\ref{keine} we obtain that the restriction of the crystal isomorphism $B^{g}(\infty)\rightarrow B^{h}(\infty),\ X_M\mapsto M$ (see \cite[Theorem 3.26]{Schu17}) induces an isomorphism of crystals $B^{g}(\lambda)\rightarrow B^{h}(\lambda).$ 

\section{Applications to affine crystals and the promotion operator}\label{KRcryssec}
In this section we consider the type $A_n$ quiver $\Qu$ with index set $I=\{1,\dots,n\}$ and orientation 
$$
\nonumber
\xygraph{
[]
!{<0pt,0pt>;<20pt,0pt>:}
*\cir<2pt>{}
!{\save -<0pt,6pt>*\txt{$_n$}  \restore}
-@{{}->} [l]
*\cir<2pt>{}
!{\save -<0pt,6pt>*\txt{$_{n-1}$}  \restore}
-@{{}->} [l]
*\cir<2pt>{}
!{\save -<0pt,6pt>*\txt{$_{n-2}$}  \restore}
-@{{}->} [l]
*{ \;  \dots \; }
-@{{}->} [l]
*\cir<2pt>{}
!{\save -<0pt,6pt>*\txt{$_{2}$}  \restore}
-@{{}->} [l]
*\cir<2pt>{}
!{\save -<0pt,6pt>*\txt{$_{1}$}  \restore}
}
$$
\subsection{} Our aim is to describe the promotion operator on the crystel $B^{h}(\lambda)$ and hence on $B^{g}(\lambda)$ for a fixed rectangular weight $\lambda=m\varpi_j$, $j\in I$. This will allow us to realize Kirillov-Reshetikhin crystals geometrically. 

For an indecomposable module $M(r,s)$ (corresponding to the root $\alpha_r+\cdots+\alpha_s$) we abbreviate the multiplicity of $M(r,s)$ in a $\mathbb{C}\Qu$-module $M$ simply by $\mu_{r,s}(M)$. We fix for the rest of this section an element $M\in B^{h}(\lambda)$. Note that the choice of the orientation gives 
\begin{equation}\label{chor}F_i^{\vee}(M,M(i,s))=\sum_{k=s}^n \mu_{i,k}(M)-\sum_{k=s}^{n-1} \mu_{i+1,k+1}(M).\end{equation}
Since $F_i^{\vee}(M,V)\leq \lambda(h_i)$ for all $i\in I$ and $V\in\mathcal{S}_i^{\vee}(\Qu)$ we get that 
$$k_i:=m-\dim\Hom_{\mathbb{C}\Qu}(S(i),M)\geq 0.$$ Moreover, since $\lambda$ is rectangular, we have
$$\mu_{r,s}(M)=0,\ \ \text{for all $r>j$ or ($r\leq j$ and $s>n-j+r$)}.$$
So we can think of an element $M\in B^{h}(\lambda)$ as an array $M=(\mu_{r,s}(M))$ with $1\leq r\leq j$ and $r\leq s\leq n-j+r$ in the Auslander-Reiten quiver $\Gamma_{\Qu}$. We associate to such an array its extended array
$$M^{\mathrm{ext}}=(\mu_{r,s}(M^{\mathrm{ext}})),\ \ 1\leq r\leq j,\ r\leq s\leq n+1-j+r$$
defined by 
$$\mu_{r,s}(M^{\mathrm{ext}})=\begin{cases} \mu_{r,s-1}(M) ,& \text{ if $r\neq s$}\\
k_r ,& \text{ if $r=s$}
\end{cases}$$
and view $M^{\mathrm{ext}}$ as a $\mathbb{C} \widetilde{\Qu}$-module, where $\widetilde{\Qu}$ is the $A_{n+1}$ quiver with same standard orientation.
\begin{ex} Let $n=3$ and $j=2$. Below is the array of $M^{\mathrm{ext}}$ and the array of $M$ is highlighted in blue (we abbreviate $\mu_{r,s}(M)=\mu_{r,s}$).
$$
\xymatrixrowsep{0.02in}
\xymatrixcolsep{0.02in}
\xymatrix{
& &{\color{blue}\mu_{1,2}}&  &{\color{blue}\mu_{2,3}}\\
& {\color{blue}\mu_{1,1}} & &{\color{blue} \mu_{2,2}}  & \\
k_1&  & k_2& &\\
}
$$
where we have $k_1=m-\mu_{1,1}- \mu_{1,2}$ and $k_2=m-\mu_{2,2}- \mu_{2,3}$.
\end{ex}
\subsection{}Fix an arbitrary $\mathbb{C} \widetilde{\Qu}$-module $N$ and let $W_{N}$ the $\unlhd^{\vee}$-maximal element of $\mathcal{S}^{\vee}_i(\widetilde{\Qu})$ such that $$\max_{V\in \mathcal{S}^{\vee}_i(\tilde{\Qu})}F^{\vee}_i(N,V)=F^{\vee}_i(N,W_{N})$$ and let $E_{N}=\tau^{-1} B$ where $B$ is an element of $\mathcal{P}^{\vee}_i(\widetilde{\Qu})$ with $W_{N} \ntrianglelefteq^{\vee} B$ and $B$ is maximal with this property. Since $\mathcal{S}_i^{\vee}(\widetilde{\Qu})=\{M(i,i), M(i,i+1),\dots, M(i,n+1)\}$ we have that $W_{N}$ and $E_{N}$ are indecomposable (if the latter exists). With other words 
$$W_{N}=M(i,s_0),\ \ \ E_{N}=M(i+1,s_0),$$
where $$s_0=\max\{i\leq s\leq n+1: F_i^{\vee}(M,M(i,s)) \text{ is maximal}\}.$$
\begin{defi}\label{defpr}
 \begin{enumerate}
\item Let $N$ be a $\mathbb{C} \widetilde{\Qu}$-module. We define operators $T_i$ for $i<j$ and $\mathrm{sh}_j$ in order to obtain another $\mathbb{C} \widetilde{\Qu}$-module as follows.
Let $T_i(N)=N$ (resp. $\mathrm{sh}_{j}(N)=N$) if $W_{N}$ (resp. $M(j,n+1)$) is not a summand  of $N$ and otherwise we set
\begin{align*}&T_i(N)=
N'\oplus E_{N},\ \text{where $N=N'\oplus W_{N}$},&\\&
\mathrm{sh}_{j}(N)=N'', \text{ where $N=N''\oplus M(j,n+1).$}\end{align*}
We define further 
$$\widetilde{\mathrm{pr}}(N)= (T_1\circ \cdots \circ T_{j-1}\circ \mathrm{sh}_{j})^{\mu_{j,n+1}(N)}(N).$$
\item We let $\mathrm{pr}(M)$ to be the $\mathbb{C}\Qu$-module determined by the array 
$$\mu_{r,s}(\mathrm{pr}(M))=\mu_{r,s}(\widetilde{\mathrm{pr}}(M^{\mathrm{ext}}))\ \ \ 1\leq r\leq j,\ \ r\leq s\leq n-j+r.$$
\end{enumerate}
\end{defi}
\begin{ex}\label{expr}
Let $n=m=j=3$ and consider the element $M\in B^{h}(\lambda)$ with array
$$
\xymatrixrowsep{0.02in}
\xymatrixcolsep{0.02in}
M=\xymatrix{
& &1&  &1&& 2\\
& 0 & &1  && 0& \\
1&  & 1 & & 1&&\\
}
$$
We get by applying our operators 
$$
\xymatrixrowsep{0.02in}
\xymatrixcolsep{0.02in}
M^{\mathrm{ext}}=\xymatrix{
&& &1&  &1&& 2\\
&& 0 & &1  && 0& \\
&1&  & 1 & & 1&&\\
1&  & 0 & & 0&&\\
}\ \ \ \ \xrightarrow{T_2\circ \mathrm{sh}_3}\xymatrix{
&& &1&  &1&& 1\\
&& 0 & &0  && 0& \\
&1&  & 1 & & 2&&\\
1&  & 0 & & 0&&\\
}
\ \ \ \ \xrightarrow{T_1}\xymatrix{
&& &1&  &1&& 1\\
&& 0 & &0  && 0& \\
&1&  & 1 & & 2&&\\
0&  & 0 & & 0&&\\
}
$$
$$
\xymatrixrowsep{0.02in}
\xymatrixcolsep{0.02in}
\xymatrix{\xrightarrow{\mathrm{sh}_3}
&& &1&  &1&& 0\\
&& 0 & &0  && 0& \\
&1&  & 1 & & 2&&\\
0&  & 0 & & 0&&\\
}\ \ \ \ \xrightarrow{T_2}\xymatrix{
&& &1&  &0&& 0\\
&& 0 & &0  && 1& \\
&1&  & 1 & & 2&&\\
0&  & 0 & & 0&&\\
}
\ \ \ \ \xrightarrow{T_1}\xymatrix{
&& &0&  &0&& 0\\
&& 0 & &1  && 1& \\
&1&  & 1 & & 2&&\\
0&  & 0 & & 0&&\\
}
$$
Hence we get 
$$
\xymatrixrowsep{0.02in}
\xymatrixcolsep{0.02in}\mathrm{pr}(M)=\xymatrix{
&& 0 & &1  && 1& \\
&1&  & 1 & & 2&&\\
0&  & 0 & & 0&&\\
}$$
\end{ex}

\subsection{}\label{section53} The promotion operator (see \cite{S72} for details) is the analogue of the cyclic Dynkin diagram automorphism on the level of crystals. On the set of all semi-standard Young tableaux $\mathrm{SSYT}(\lambda)$ of shape $\lambda$ over the alphabet $1\prec 2\cdots \prec n+1$ the promotion operator can be described as follows. Let $T$ be a Young tableaux, then we get $\mathrm{pr}(T)$ by removing all letters $(n+1)$, adding 1 to each letter in the remaining tableaux, using jeu-de-taquin to slide all letters up and finally filling the holes with 1's. Combining \cite[Theorem 6.4]{Sa06} and Theorem~\ref{crystalstructurelambda} we get a classical crystal isomorphism (which is known explicitly only for the standard orientation)
$$\varphi: B^{h}(\lambda)\rightarrow \mathrm{SSYT}(\lambda),\ M\mapsto \varphi(M)$$
where the semi-standard tableaux $\varphi(M)$ for $M\in B^{h}(\lambda)$ is determined by 
\begin{equation}\label{tabdef}\mu_{r,s}(M)=\text{\# (s+1) in row r of $\varphi(M)$},\ \ 1\leq r\leq j,\ \ r\leq s\leq n-j+r.\end{equation}
We claim that $\mathrm{pr}$ from Definition~\ref{defpr} is the promotion operator on $B^{h}(\lambda)$ and our strategy is to show that it commutes with the promotion operator on $\mathrm{SSYT}(\lambda)$ under the isomorphism $\varphi$. It is possible to define a tableau $\varphi(M')$ using \eqref{tabdef} for any $\mathbb{C}\Qu$-module $M'$. Obviously we have
\begin{equation}\label{refw2}
M'\in B^{h}(\lambda) \Longleftrightarrow \varphi(M')\in \mathrm{SSYT}(\lambda)
\end{equation}

\begin{thm}\label{isopr}
Let $\lambda=m\varpi_j$ be a rectangular weight of type $A_n$. Then we have a commutative diagram 
$$\begin{xy}
  \xymatrix{
      B^{h}(\lambda)\ar[d]^{\mathrm{pr}} \ar[rr]^{\varphi}   &     & \mathrm{SSYT}(\lambda) \ar[d]^{\mathrm{pr}}\\
                              B^{h}(\lambda)  \ar[rr]^{\varphi} & & \mathrm{SSYT}(\lambda)   }
\end{xy}$$
\begin{proof}
Let $M\in B^{h}(\lambda)$ and let $\mathrm{pr}(M)$ the $\mathbb{C}\Qu$-module as described in Definition~\ref{defpr}. If we can show that $\mathrm{pr}(\varphi(M))=\varphi(\mathrm{pr}(M))$ we get at once $\mathrm{pr}(M)\in B^{h}(\lambda)$ and the commutativity by \eqref{refw2}. Note that $M^{\mathrm{ext}}$ also encodes the tableaux $\varphi(M)$ via
$$\mu_{r,s}(M^{\mathrm{ext}})=\text{\# s in row $r$'s of $\varphi(M)$},\ \ 1\leq r\leq j,\ \ r\leq s\leq n-j+r.$$ In the remaining part of the proof we will show that $\widetilde{\mathrm{pr}}(M^{\mathrm{ext}})$ encodes the skew-tableaux of shape $\lambda\backslash \mu$, $\mu=(\mu_{j,n}(M),0,\dots)$, which is obtained from $\varphi(M)$ as follows: remove all letters $n+1$ and slide the boxes up using jeu-de-taquin. This would finish the proof by construction (see Definition~\ref{defpr}(2)). So assume that the last two rows of $\varphi(M)$ are given by
$$\ytableausetup
 {mathmode, boxsize=2.4em}\begin{ytableau}
a_1 & a_2 &\cdots & \tiny{a_{m-1}}  &a_m \\
b_1 & b_2  &\cdots  & \tiny{b_{m-1}} &b_m \\
\end{ytableau} 
$$
with $b_m=b_{m-1}=\cdots=b_{m-\mu_{j,n}(M)+1}=n+1$. So the tableaux corresponding to $\mathrm{sh}_j(M^{\mathrm{ext}})$ is given by 
$$\ytableausetup
 {mathmode, boxsize=2.5em}\begin{ytableau}
a_1 & a_2 &\cdots & \tiny{a_{m-1}}  &a_m \\
b_1 & b_2  &\cdots  & \tiny{b_{m-1}} & \\
\end{ytableau} 
$$
If we slide the empty box to the $(j-1)$-th row we have to move an entry $a_p$ to the $j$-th row. In order to figure out what $p$ is we have to consider the sums:
\begin{align*}A_r:=\sum_{k=r}^{n+1}\mu_{j,k}(M^{\mathrm{ext}})-\sum_{k=r-1}^{n}\mu_{j-1,k}(M^{\mathrm{ext}})\geq 0\ \ \ j\leq r\leq n+1.
\end{align*}
Then we have $p=\max\{j\leq r\leq n+1: -A_r \text{ is maximal}\}$. By the definition of $T_{j-1}$ this exactly means that the tableaux corresponding to $T_{j-1}\mathrm{sh}_j(M^{\mathrm{ext}})$ is given by 
$$\ytableausetup
 {mathmode, boxsize=2.5em}\begin{ytableau}
a_1 & a_2 &\cdots &a_{p-1}& a_{p+1}& \cdots &a_{m}  &\\
b_1 & b_2 &\cdots &b_{p-1}& a_{p}&b_{p}& \cdots &b_{m-1}  \\
\end{ytableau} 
$$
Hence $T_{j-1}$ slides the empty box in row $j$ to the $(j-1)$-th row following the rules of jeu-de-taquin. Now it is clear that sliding the empty box in row $(j-1)$-th to the top means to apply the operators $T_{j-2},\dots,T_1$ to $T_{j-1}\mathrm{sh}_j(M^{\mathrm{ext}})$. Repeating the above steps yields the claim.
\end{proof}
\end{thm}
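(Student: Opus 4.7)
The plan is to show the diagram commutes by tracking what $\widetilde{\mathrm{pr}}$ does to $M^{\mathrm{ext}}$ step by step and identifying each step with a jeu-de-taquin move on $\varphi(M)$. Once that is done, the commutativity together with \eqref{refw2} will also imply $\mathrm{pr}(M)\in B^{h}(\lambda)$, so well-definedness is for free.

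First, I would reinterpret the extended array as a tableau. The defining rule \eqref{tabdef} says $\mu_{r,s}(M)$ is the number of $(s+1)$'s in row $r$ of $\varphi(M)$, so under the shift $\mu_{r,s}(M^{\mathrm{ext}})=\mu_{r,s-1}(M)$ the array $M^{\mathrm{ext}}$ records the number of $s$'s in row $r$ for $s>r$. For the diagonal entries I would check that $k_r=m-\dim\Hom_{\mathbb{C}\Qu}(S(r),M)$ equals the number of $r$'s in row $r$ of $\varphi(M)$, which comes from the fact that $\lambda$ is a rectangle of width $m$ and every box in row $r$ with entry greater than $r$ contributes exactly once to $\dim\Hom(S(r),M)$ by the description of the indecomposables in the $A_n$ case. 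After this identification, any $\mathbb{C}\widetilde{\Qu}$-module of the appropriate total dimension corresponds to a filling of the shape $\lambda$.

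Next, I would match each elementary move. The operator $\mathrm{sh}_j$ strips off one copy of $M(j,n+1)$, which is exactly ``remove one $(n{+}1)$ from the bottom row''; this leaves the rightmost box of row $j$ empty. The operator $T_i$ replaces a summand $W_N=M(i,s_0)$ by $E_N=M(i+1,s_0)$, which in tableau language decrements the count of $s_0$'s in row $i$ by one and increments the count of $s_0$'s in row $i+1$ by one, i.e.\ moves one letter $s_0$ down from row $i$ to row $i+1$, leaving an empty box in row $i$. The crux is to verify that the column $s_0=\max\{i\le s\le n+1:F_i^{\vee}(N,M(i,s))\text{ maximal}\}$ coincides with the entry selected by jeu-de-taquin when one slides the empty box in row $i+1$ up into row $i$. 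Using \eqref{chor}, $F_i^{\vee}$ is exactly the difference between the tails of row $i$ and row $i+1$ beginning at column $s$, and the maximum is realized at the \emph{rightmost} column where row $i$ still has a genuine surplus. By semistandardness, that column carries the smallest letter in row $i$ that is strictly greater than its left neighbour in row $i+1$, which is precisely the jeu-de-taquin choice.

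With the single-step identification in hand, iterating $T_{j-1},T_{j-2},\dots,T_1$ propagates the empty box from row $j$ up to row $1$ and therefore performs one full jeu-de-taquin slide, and raising the composite $T_1\circ\cdots\circ T_{j-1}\circ\mathrm{sh}_j$ to the power $\mu_{j,n+1}(M^{\mathrm{ext}})=\mu_{j,n}(M)$ does this for every $(n{+}1)$ initially present in the bottom row of $\varphi(M)$. The resulting $\widetilde{\mathrm{pr}}(M^{\mathrm{ext}})$ therefore encodes, via the same convention \# of $s$'s in row $r$, the skew tableau obtained from $\varphi(M)$ by deleting all $(n{+}1)$'s and sliding up. Finally, passing from $\widetilde{\mathrm{pr}}(M^{\mathrm{ext}})$ to $\mathrm{pr}(M)$ by dropping the diagonal column and re-indexing $\mu_{r,s}\leftarrow\mu_{r,s+1}$ corresponds exactly to incrementing every remaining entry by $1$ and filling the empty cells in row~$1$ with~$1$'s. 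This is the classical promotion on $\mathrm{SSYT}(\lambda)$, so $\varphi(\mathrm{pr}(M))=\mathrm{pr}(\varphi(M))$. The main obstacle in this plan is the jeu-de-taquin matching in the previous paragraph; everything else is bookkeeping on the two encodings.
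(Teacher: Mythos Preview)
Your proposal is correct and follows essentially the same route as the paper: both arguments identify $M^{\mathrm{ext}}$ with the content multiplicities of $\varphi(M)$, read $\mathrm{sh}_j$ as deleting one $(n{+}1)$ from row $j$, and verify that each $T_i$ implements a single jeu-de-taquin slide of the empty box from row $i{+}1$ to row $i$ by matching the maximizing index $s_0$ for $F_i^{\vee}$ with the entry selected by the slide. Your write-up is somewhat more explicit about the diagonal entries $k_r$ and the final ``shift back'' step, but the strategy and the key computation coincide.
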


\begin{ex}We consider the same situation as in Example~\ref{expr}. Then $M$ corresponds to the Young tableaux 
$$\ytableausetup
 {mathmode, boxsize=1.3em}\varphi(M)= \begin{ytableau}
1 & 2 & 4 \\
3 & 4 & 5 \\ 
4 & 6 & 6 \\
\end{ytableau} \ \ 
\xrightarrow{}\ \ \mathrm{pr}(\varphi(M))=\ytableausetup
 {mathmode, boxsize=1.3em}\begin{ytableau}
1 & 1 & 3 \\
2 & 4 & 5 \\ 
5 & 5 & 6 \\
\end{ytableau} 
$$
By the calculations in Example~\ref{expr} we see that $\mathrm{pr}(\varphi(M))$ coincides with $\varphi(\mathrm{pr}(M))$.
\end{ex}

\subsection{}Combinatorial descriptions of Kirillov-Reshetikhin crystals of type $A^{(1)}_n$ were provided for example in \cite{K12} and \cite{S02}, where the affine crystal structure in the latter work is given without using the promotion operator. The affine Kashiwara operators are given by \begin{equation}\label{affka}\tilde{f}_{0}:=\mathrm{pr}^{n}\circ \tilde{f}_{1}\circ \mathrm{pr}, \mbox{ and } \tilde{e}_{0}:=\mathrm{pr}^{n}\circ \tilde{e}_{1}\circ \mathrm{pr}.\end{equation}
\begin{cor}\label{geoKR}Let $\lambda=m\varpi_j$ be a rectangular weight of type $A_n$. The following operation gives $B^{g}(\lambda)$ the structure of an affine crystal isomorphic to the Kirillov-Reshetikhin crystal 
$$\tilde{f}_0 Y_{[M]}^{\lambda}=\begin{cases} Y_{[\mathrm{pr}(M)^n\circ \tilde{f}_1\circ \mathrm{pr}(M)]}^{\lambda},& \text{ if $k_1<\mu_{j,n}(M)+\mu_{1,1}(\mathrm{pr}(M))$}\\
0,& \text{ otherwise}.\end{cases}$$
\begin{proof}
We only have to check that $\tilde{f}_1$ acts on $\mathrm{pr}(M)$ if and only if $k_1<\mu_{j,n}(M)+\mu_{1,1}(\mathrm{pr}(M))$. The rest follows from Theorem~\ref{isopr} and Theorem~\ref{crystalstructurelambda}. Clearly $\tilde{f}_1$ acts if and only if 
$$\dim\Hom_{\mathbb{C}\Qu}(S(1),\mathrm{pr}(M))<\dim\Hom_{\mathbb{C}\Qu}(S(2),\mathrm{pr}(M)).$$
Obviously in the process of obtaining $\mathrm{pr}(M)$ we subtract $\mu_{j,n}(M)$ entries in the first column of $M$ (viewed in the array of $M$ as in Example~\ref{expr}), i.e. $\dim\Hom_{\mathbb{C}\Qu}(S(1),\mathrm{pr}(M))=m-\mu_{j,n}(M)$. Similarly we get a
$$\dim\Hom_{\mathbb{C}\Qu}(S(2),\mathrm{pr}(M))=m-\mu_{j,n}(M)+w,$$
where $w$ is the number of times we add an entry to the second column in the process of obtaining $\mathrm{pr}(M)$. Since $w=\mu_{j,n}(M)+\mu_{1,1}(\mathrm{pr}(M))-k_1$ we get the desired result.
\end{proof}
\end{cor}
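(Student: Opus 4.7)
The plan is to combine the classical crystal isomorphism $B^{g}(\lambda)\cong B^{h}(\lambda)$ of Theorem~\ref{crystalstructurelambda} with the identification of $\mathrm{pr}$ on $B^{h}(\lambda)$ with the tableau promotion operator on $\mathrm{SSYT}(\lambda)$ given by Theorem~\ref{isopr}. Since formula \eqref{affka} defines the KR affine crystal structure on $\mathrm{SSYT}(\lambda)$, transporting it back yields an affine crystal structure on $B^{g}(\lambda)$ with $\tilde f_{0} = \mathrm{pr}^{n}\circ \tilde f_{1}\circ \mathrm{pr}$, isomorphic to the KR crystal by construction. What remains is to characterize combinatorially on the module side when the result vanishes, which, since $\mathrm{pr}$ is a bijection on $B^{h}(\lambda)$, reduces to deciding when $\tilde f_{1}$ acts nontrivially on $\mathrm{pr}(M)\in B^{h}(\lambda)$.

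For the standard orientation and the root $\alpha_{1}$, the combinatorial Kashiwara operator $\tilde f_{1}$ from Section~\ref{section41} acts nontrivially on a $\mathbb{C}\Qu$-module $N$ exactly when $V_{N}\in\mathcal{S}_{1}(\Qu)$ is strictly larger than $S(1)$. Using the maximality characterization of $V_{N}$ for this orientation, this is equivalent to the strict inequality
$$\dim\Hom_{\mathbb{C}\Qu}(S(1),N)\;<\;\dim\Hom_{\mathbb{C}\Qu}(S(2),N).$$
Hence the problem reduces to computing these two Hom dimensions for $N=\mathrm{pr}(M)$ in terms of the input data $k_{1}$, $\mu_{j,n}(M)$ and the output datum $\mu_{1,1}(\mathrm{pr}(M))$.

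To compute these dimensions, I would track the iteration $(T_{1}\circ\cdots\circ T_{j-1}\circ \mathrm{sh}_{j})^{\mu_{j,n}(M)}$ on the extended array $M^{\mathrm{ext}}$. Each loop strips a copy of $M(j,n+1)$ via $\mathrm{sh}_{j}$ and propagates the resulting hole up the first column, which yields at once $\dim\Hom_{\mathbb{C}\Qu}(S(1),\mathrm{pr}(M))=m-\mu_{j,n}(M)$. The second column changes by $w$, where $w$ counts how often a cascade $T_{i}$ deposits an entry into the second column over the $\mu_{j,n}(M)$ iterations. Reading this off the final array gives the identity $w=\mu_{j,n}(M)+\mu_{1,1}(\mathrm{pr}(M))-k_{1}$, and the inequality above becomes $w>0$, which is exactly the stated criterion.

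The main (and really only) obstacle is pinning down the identity for $w$. Although each elementary operator $T_{i}$ and $\mathrm{sh}_{j}$ is transparent, summing their effect across $\mu_{j,n}(M)$ iterations calls for an invariant. The clean move is to express $w$ via final-state data: the quantities $k_{1}$ and $\mu_{1,1}(\mathrm{pr}(M))$ are directly legible from the initial and terminal arrays, so the identity collapses to an accounting of first- and second-column mass before and after the full iteration, bypassing any need to follow the cascades individually.
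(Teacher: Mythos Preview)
Your overall strategy and the computations in the last two paragraphs coincide with the paper's proof: reduce the question to whether $\tilde f_{1}^{\lambda}$ acts on $\mathrm{pr}(M)$, identify this with the Hom inequality $\dim\Hom_{\mathbb{C}\Qu}(S(1),\mathrm{pr}(M))<\dim\Hom_{\mathbb{C}\Qu}(S(2),\mathrm{pr}(M))$, and track column sums through the iteration defining $\widetilde{\mathrm{pr}}$ to obtain $w=\mu_{j,n}(M)+\mu_{1,1}(\mathrm{pr}(M))-k_{1}$.

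There is, however, a genuine gap in your derivation of the Hom inequality. You assert that $\tilde f_{1}$ acts nontrivially on $N$ exactly when $V_{N}\in\mathcal S_{1}(\Qu)$ is strictly larger than $S(1)$, and then translate this into the Hom inequality via ``the maximality characterization of $V_{N}$''. But for the orientation $n\to n-1\to\cdots\to 1$ the vertex $1$ is a sink, hence $S(1)$ is simple projective and is the \emph{only} indecomposable admitting a nonzero map to $S(1)$. Consequently $\mathcal P_{1}(\Qu)=\mathcal S_{1}(\Qu)=\{S(1)\}$, so $V_{N}=S(1)$ for every $N$ and your criterion is vacuously false; neither of the two claimed equivalences can hold. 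The paper simply writes ``clearly'' at this point. An honest justification is to note that $\mathcal S_{1}(\Qu)=\{S(1)\}$ and $\tau S(1)=0$ force $\varepsilon_{1}(N)=F_{1}(N,S(1))=\mu_{1,1}(N)$, and then compute directly
\[
\varphi_{1}^{\lambda}(N)=\varepsilon_{1}(N)+\lambda(h_{1})-2(\underline{\dim}\,N)_{1}+(\underline{\dim}\,N)_{2}
=\lambda(h_{1})+\dim\Hom_{\mathbb{C}\Qu}(S(2),N)-\dim\Hom_{\mathbb{C}\Qu}(S(1),N),
\]
using $(\underline{\dim}\,N)_{1}=\dim\Hom(S(1),N)$ and $(\underline{\dim}\,N)_{2}=\dim\Hom(S(1),N)-\mu_{1,1}(N)+\dim\Hom(S(2),N)$. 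For $j\ge 2$ this is precisely the stated inequality (alternatively, read it off from the tableau model via Theorem~\ref{isopr}). With this step repaired your argument agrees with the paper's.
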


\bibliographystyle{plain}
\bibliography{bibfile}
\end{document}